\newtheorem{theorem}{Theorem}[section]
\newtheorem{definition}[theorem]{Definition}
\newtheorem{lemma}[theorem]{Lemma}
\newtheorem{proposition}[theorem]{Proposition}
\newtheorem{remark}[theorem]{Remark}
\newtheorem{problem}{Problem}[section]
\numberwithin{equation}{section}
\DeclareMathOperator*{\argmin}{arg\,min}
\newcommand{\mS}{\mathcal{S}}
\newcommand{\aT}{{\bf a}_T}
\newcommand{\bT}{{\bf b}_T}
\begin{document}
	
\title[Multilevel SHM via Optimal Control]{Multilevel Selective Harmonic Modulation via Optimal Control}
\author{Umberto Biccari\textsuperscript{\,$\ast$\,$\dagger$}}  
\author{Carlos Esteve-Yag\"ue\textsuperscript{\,$\ast$}}
\author{Deyviss Jes\'us Oroya-Villalta\textsuperscript{\,$\dagger$}}

\address{\textsuperscript{$\ast$}\, Chair of Computational Mathematics, Fundaci\'on Deusto, Avenida de las Universidades 24, 48007 Bilbao, Basque Country, Spain} 
\address{\textsuperscript{$\dagger$}\, Facultad de Ingenier\'ia, Universidad de Deusto, Avenida de las Universidades 24, 48007 Bilbao, Basque Country, Spain.} 

\email{umberto.biccari@deusto.es, u.biccari@gmail.com, carlos.esteve@deusto.es, djoroya@deusto.es}

\thanks{This project has received funding from the European Research Council (ERC) under the European Union’s Horizon 2020 research and innovation programme (grant agreement NO: 694126-DyCon). The work of U.B. is partially supported by the Elkartek grant KK-2020/00091 CONVADP of the Basque government, by the Air Force Office of Scientific Research (AFOSR) under Award NO: FA9550-18-1-0242 and by the Grant PID2020-112617GB-C22 KILEARN of MINECO (Spain)}

\begin{abstract}
We consider the \textit{Selective Harmonic Modulation} (SHM) problem, consisting in the design of a staircase control signal with some prescribed
frequency components. In this work, we propose a novel methodology to address SHM as an optimal control problem in which the admissible controls are piecewise constant functions, taking values only in a given finite set. In order to fulfill this constraint, we introduce a cost functional with piecewise affine penalization for the control, which, by means of Pontryagin’s maximum principle, makes the optimal control have the desired staircase form. Moreover, the addition of the penalization term for the control provides uniqueness and continuity of the solution with respect to the target frequencies. Another advantage of our approach is that the number of switching angles and the waveform need not be determined a priori. Indeed, the solution to the optimal control problem is the entire control signal, and therefore, it determines the waveform and the location of the switches. We also provide numerical examples in which the SHM problem is solved by means of our approach.
\end{abstract}


\maketitle

\section{Introduction and motivations}\label{Section1}

Selective Harmonic Modulation (SHM) \cite{Sun1992,Sun1996} is a well-known methodology in power electronics engineering, employed to improve the performance of a converter by controlling the phase and amplitude of the harmonics in its output voltage. As a matter of fact, this technique allows to increase the power of the converter and, at the same time, to reduce its losses. In broad terms, SHM consists in generating a \textit{control signal} with a desired harmonic spectrum by modulating some specific lower-order Fourier coefficients. In practice, the signal is constructed as a step function with a finite number of switches, taking values only in a given finite set. Such a signal can be fully characterized by two features (see Fig. \ref{fig:exampleSHE}): 
\begin{itemize}
	\item[1.] The \textit{waveform}, i.e. the sequence of values that the function takes in its domain.
	\item[2.] The \textit{switching angles}, i.e. the sequence of points where the signal switches from one value to following one. 
\end{itemize}

Using this simple characterization of the signal, in many practical situations, the SHM problem is reduced to a finite-dimensional optimization one in which, for a given suitable waveform, the aim is to find the optimal location of the switching angles. However, this approach has the  difficulty of choosing a suitable waveform,  which may be quite cumbersome in some situations.  In fact, even determining the number of switching angles is not straightforward in general. To overcome these difficulties,  we propose a new approach to SHM based on control theory: the Fourier coefficients of the signal are identified with the terminal state of a controlled dynamical system, where the control is actually the signal, solution to the SHM problem. We then look for piecewise constant controls, taking values only in a given finite set, and satisfying the prescribed terminal condition (see Section \ref{sec:Contributions} for more details). 

One of the main difficulties in our approach is  that the constraints on the control, which must have staircase form (taking values only in a given finite set), prevent us from implementing the standard numerical tools in optimal control. Specifically, one of the most popular methodologies to solve optimal control problems is the combination of automatic differentiation with nonlinear convex optimization, achieving a good algorithmic performance. However, the use of these optimizers is restricted to cases where the space of admissible controls is convex,  not being directly applicable to our problem. In order to bypass this obstruction, we consider a variant of the optimal control problem, removing the staircase constraint on the control, and adding a suitable convex penalization term, which makes the solution have the desired staircase form.

The main contributions of the present paper are the following ones:
\begin{enumerate}
	\item[1.] We reformulate the SHM problem as an optimal control one, with a staircase-form constraint on the control. An advantage of this formulation is that neither the waveform of the solution nor the number of switching angles need to be a priori determined.  
	\item[2.] We introduce a penalization term for the control which implicitly induces the desired staircase property on the solution to the optimal control problem. Different choices of the penalization term can give rise to solutions with different waveform.
	\item[3.] For this penalization term, we prove uniqueness and continuity of the solution with respect to the target frequencies. We point out that this continuity is highly desirable in real applications of SHM, and sometimes, difficult to achieve.
	\item[4.] We also provide numerical examples, where we solve the SHM problem through our approach. These examples confirm that the solution obtained via our methodology is, effectively, continuous with respect to the target frequencies.
\end{enumerate}

Let us mention that optimal discrete-valued control problems as the one presented in this work has already been discussed in \cite{lee1999control} (see also \cite{wu2009filled,yu2013optimal}) using a different approach, the so-called \textit{control parametrization enhancing technique}. However, this method presents several drawbacks which in our case do not arise. First of all, the technique presented in \cite{lee1999control} requires to pre-fix the number of switching angles, which in many cases is not straightforward. Secondly, it does not ensure the staircase-form of the optimal controls, which is instead provided by our methodology. Finally, our approach also yields uniqueness and continuity of the optimal control with respect to the target frequencies, which is not discussed in \cite{lee1999control} for the control parametrization enhancing technique.

This document is structured as follows. In Section \ref{sec:math_formulation}, we introduce the mathematical formulation of the general SHM problem. In Section \ref{sec:SHE_finite-dim_pbm}, we recall the classical methodology casting the SHM problem through finite-dimensional optimization and we show the main criticalities related to this approach. In Section \ref{sec:Contributions}, we present the new approach to SHM  as an optimal control problem, and state our main results concerning the uniqueness and stability of the solution. In Section \ref{sec:Proof}, we give the proofs of the theoretical results presented in Section \ref{sec:Contributions}. Section \ref{sec:Simulations} is devoted to some numerical examples of concrete SHM problems that we have solved by means of our methodology. Finally, in Section \ref{sec:conclusions}, we summarize and comment the conclusions of our work.

\section{Mathematical formulation of the SHM problem}\label{sec:math_formulation}

This section is devoted to the mathematical formulation of the SHM problem and to introduce the notation that will be used throughout the paper. Let 
\begin{align}\label{eq:Udef}
	\mathcal{U} = \{u_1, \ldots, u_L\}
\end{align}
be a given set of $L\geq 2$ real numbers satisfying
\begin{align*}
	u_1 = -1, \; u_L = 1 \;\text{ and } \; u_k<u_{k+1}, \quad\; \forall k\in \{1,\ldots, L\}.
\end{align*}

The goal is to construct a step function $u(t):[0,2\pi)\to\mathcal U$, with a finite number of switches, such that some of its lower-order Fourier coefficients take specific values prescribed a priori.

Due to applications in power converters,  it is typical to only consider functions with \textit{half-wave symmetry}, i.e. 
\begin{align}\label{eq:half-wave symmetry}
	u(t + \pi) = -u(t)\quad \forall t \in [0,\pi).
\end{align}

In view of \eqref{eq:half-wave symmetry}, in what follows, we will only work with the restriction $u\vert_{[0,\pi)}$, which, with some abuse of notation, we still denote by $u$. Moreover, as a consequence of this symmetry, the Fourier series of $u$ only involves the odd terms (as the even terms just vanish), i.e.
\begin{align*}
	u(t) = \sum_{\underset{j\, odd}{j \in \mathbb{N}}} a_j \cos(jt)+ \sum_{\underset{k\, odd}{j \in \mathbb{N}}}  b_j \sin(jt),
\end{align*}
with
\begin{equation}\label{eq:an}
	a_j = \frac{2}{\pi} \int_0^\pi u(\tau ) \cos(j \tau)\,d\tau, \quad\quad\quad b_j = \frac{2}{\pi} \int_0^\pi u(\tau)  \sin(j \tau)\,d\tau.
\end{equation}

As we anticipated, we are only considering piecewise constant functions with a finite number of switches, taking values only in $\mathcal{U}$. In other words, we look for functions $u: [0,\pi)\to \mathcal{U}$ of the form 
\begin{align}\label{eq:uExpl}
	u (t)= \sum_{m=0}^M s_m\chi_{[\phi_m,\phi_{m+1})} (t), \quad M\in\mathbb{N} 
\end{align}
for some $\mS = \{ s_m\}_{m=0}^M$ satisfying 
\begin{align*}
	s_m\in \mathcal{U}\;\text{ and }\; s_m\neq s_{m+1}\;\text{ for all }\;m\in \{0,\ldots, M\}
\end{align*}
and $\Phi = \{ \phi_m\}_{m=1}^{M}$ such that
\begin{align*}
	0= \phi_0 < \phi_1 <\ldots < \phi_M < \phi_{M+1} = \pi.
\end{align*}

In \eqref{eq:uExpl}, $\chi_{[\phi_m,\phi_{m+1})}$ denotes the characteristic function of the interval $[\phi_m,\phi_{m+1})$. With these notations, we can  define the waveform and the switching angles as follows.

\begin{definition}\label{def: waveform and switching angles}
For a function $u: [0,\pi) \to \mathcal{U}$ of the form \eqref{eq:uExpl}, we refer to $\mS$ as the \emph{waveform} and to $\Phi$ as the \emph{switching angles}.
\end{definition}

Observe that any $u$ of the form \eqref{eq:uExpl} is fully characterized by its waveform and switching angles. An example of such a function is given in Fig. \ref{fig:exampleSHE}. 

\begin{SCfigure}[1.3][h]
	\centering
	\includegraphics[scale=0.3]{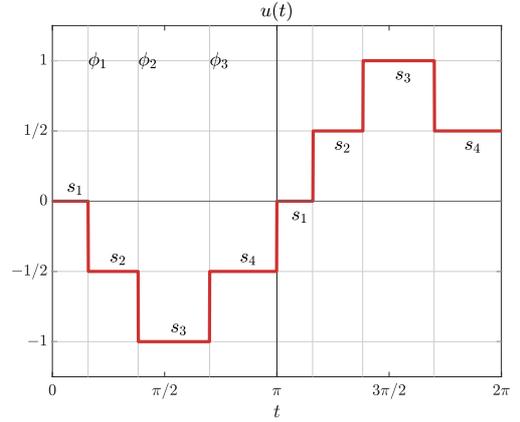} 
	\caption{A possible solution to the SHM Problem, where we considered the control-set $\mathcal{U} = \{-1, -1/2, 0, 1/2, 1\}$. We show the switching angles $\Phi$ and the waveform $\mS$ (see Definition \ref{def: waveform and switching angles}). The function $u(t)$ is displayed on the whole interval $[0,2\pi)$ to highlight the half-wave symmetry defined in \eqref{eq:half-wave symmetry}.}
	\label{fig:exampleSHE}
\end{SCfigure}

In the practical engineering applications that motivated our study, due to technical limitations, it is preferable to employ signals taking consecutive values in $\mathcal{U}$. In the sequel, we will refer to this property of the waveform as the \emph{staircase property}. We can rigorously formulate this property as follows.

\begin{definition}\label{def:staircase prop}
We say that a signal $u$ of the form \eqref{eq:uExpl} fulfills the \emph{staircase property} if its waveform $\mS$ satisfies
\begin{align}\label{eq:staircase prop}
	(s_m^{min},s_{m}^{max}) \cap \mathcal{U} = \emptyset, \quad \text{for all } m\in \{ 0, \ldots, M-1 \},
\end{align}
where $s^{min}_m = s_m\wedge s_{m+1}$ and $s^{max}_m = s_m \vee s_{m+1}.$
\end{definition}

Note that when $\mathcal{U} = \{-1,1\}$ (which is known in the SHM literature as the bi-level problem), this property is satisfied for any $u$ of the form \eqref{eq:uExpl}.

We can now formulate the SHM problem as follows.

\begin{problem}[SHM]\label{pb:SHEp}
Let $\mathcal{U}$ be given as in \eqref{eq:Udef}, and let $\mathcal{E}_a$ and $\mathcal{E}_b$ be finite sets of odd numbers of cardinality $\vert\mathcal{E}_a\vert = N_a$ and $\vert\mathcal{E}_b\vert = N_b$ respectively. For any two given vectors $\aT \in \mathbb{R}^{N_a}$ and $\bT \in \mathbb{R}^{N_b} $, we want to construct a function $u: [0,\pi)\to\mathcal{U}$ of the form \eqref{eq:uExpl}, satisfying \eqref{eq:staircase prop}, such that the vectors ${\bf a} \in \mathbb{R}^{N_a}$ and ${\bf b} \in \mathbb{R}^{N_b}$, defined as
\begin{align}\label{vectors a and b}
	{\bf a} = \big( a_j \big)_{j\in \mathcal{E}_a} \qquad \text{and} \qquad
	{\bf b} = \big( b_j \big)_{j\in \mathcal{E}_b}
\end{align}
satisfy ${\bf a} = \aT$ and ${\bf b} = \bT$, where the coefficients $a_j$ and $b_j$ in \eqref{vectors a and b} are given by \eqref{eq:an}.
\end{problem}  

\begin{remark}[SHE]\label{remark:SHE}
In Problem \ref{pb:SHEp}, we gave a very general formulation of SHM. This formulation contains also the so-called \emph{Selective Harmonic Elimination (SHE)} problem (\cite{Sun1996}), in which the target vectors are such that 
\begin{displaymath}
	\begin{array}{ll}
		(a_T)_1 \neq 0  \hspace{1em} (a_T)_{i\neq1} = 0 & \quad\text{for all } i \in \mathcal{E}_a 
		\\[3pt]
		(b_T)_1 \neq 0  \hspace{1em} (b_T)_{j\neq1} = 0 & \quad\text{for all } j \in \mathcal{E}_b. 
	\end{array} 
\end{displaymath}

SHE is of great relevance in the electric engineering literature. Its objective is to generate a signal with amplitude 
\begin{align*}
	m_1 = \sqrt{a_1^2+b_1^2}
\end{align*}
and phase 
\begin{align*}
	\varphi_1=\arctan\left(\frac{b_1}{a_1}\right),
\end{align*}
removing some specific high-frequency components. In this way, SHE may be understood as a generator of clean Fourier modes through a staircase signal. 
\end{remark}

\section{SHM via finite-dimensional optimization}\label{sec:SHE_finite-dim_pbm}

A typical approach to the SHM Problem \ref{pb:SHEp} (\cite{Konstantinou2010,perez20172n,Yang2015}) is to look for solutions $u$ with a specific waveform $\mathcal{S}$ a priori determined, optimizing only over the location of the switching angles $\Phi$. Note that, for a fixed waveform $\mS$, the Fourier coefficients of a function $u$ of the form \eqref{eq:uExpl} can be written in terms of the switching angles $\Phi$ in the following way:
\begin{align*}
	& a_j = a_j(\Phi) =  \frac{2}{j\pi} \sum_{m=0}^{M} s_m \Big[\sin(j\phi_{m+1}) -\sin(j\phi_{m})\Big]
	\\[5pt]
	& b_j = b_j(\Phi) = \frac{2}{j\pi} \sum_{m=0}^{M} s_m \Big[\cos(j\phi_{m}) -\cos(j\phi_{m+1})\Big]
\end{align*}

Hence, for two sets of odd numbers $\mathcal{E}_a$ and $\mathcal{E}_b$ as in Problem \ref{pb:SHEp}, and any fixed $\mS$, we can define the functions
\begin{equation}\label{eq: a_S b_S}
	{\bf a}_\mS (\Phi) := \big(a_j (\Phi)\big)_{j\in \mathcal{E}_a} \in \mathbb{R}^{N_a}, \quad\quad\quad {\bf b}_\mS (\Phi) \hspace{0.1em}:= \big(b_j (\Phi)\big)_{j\in \mathcal{E}_b} \in \mathbb{R}^{N_b}
\end{equation}
which associate, to any sequence of switching angles $\{\phi_m\}_{m=1}^{M}$, the corresponding Fourier coefficients. Therefore, SHM can be cast as a finite-dimensional optimization problem in the following way.

\begin{problem}[Optimization problem for SHM]\label{pb:SHE opt}
Let $\mathcal{E}_a$, $\mathcal{E}_b$, $\aT$, and $\bT$ be given as in Problem \ref{pb:SHEp}. Let $\mathcal S := \{ s_m\}_{m=0}^M$ be a fixed waveform satisfying \eqref{eq:staircase prop}. We look for a sequence of switching angles $\Phi = \{\phi_m\}_{m=1}^{M}$ solution to the following minimization problem:
\begin{displaymath}
	\begin{array}{l}
		\displaystyle\min_{\Phi \in [0,\pi]^{M}} \bigg( \|{\bf a}_\mS (\Phi) - \aT\|^2 + \| {\bf b}_\mS (\Phi) - \bT\|^2\bigg)
		\\[10pt]
		\mbox{subject to: } 0 = \phi_0 <\phi_1 < \ldots < \phi_{M} < \phi_{M+1} = \pi,
	\end{array} 
\end{displaymath}
where ${\bf a}_\mS (\Phi)$ and ${\bf b}_\mS (\Phi)$ are defined as in \eqref{eq: a_S b_S}.
\end{problem}

At this regard, it is important to notice that the optimization Problem \ref{pb:SHE opt} solves the original SHM Problem \ref{pb:SHEp} only when the minimum equals zero. This makes necessary to fully characterize the space of targets $(\aT,\bT)$ for which the solution of Problem \ref{pb:SHE opt} is a solution of Problem \ref{pb:SHEp}. With this aim, we will define the the \textit{optimal value} and the \textit{solvable set} as follows.

\begin{definition}[optimal value]
We call \emph{optimal value} $V_{\mS}:\mathbb{R}^{N_a}\times \mathbb{R}^{N_b} \rightarrow \mathbb{R}$, the function that takes as input variables the target vectors $\aT$ and $\bT$ and returns the optimal value of the Problem \ref{pb:SHE opt}.
\end{definition}

\begin{definition}[solvable set]
We define a \emph{solvable set} $\mathcal{R}_{\mS}$ as:
\begin{align*}
	\mathcal{R}_{\mS} = \Big\{(\aT,\bT)\in \mathbb{R}^{N_a+N_b}\;:\; V_{\mS}(\aT,\bT) = 0\Big\}
\end{align*}
\end{definition}

Furthermore, we define the following \textit{policy} function which maps the solutions of Problem \ref{pb:SHE opt} into the set $\mathcal{R}_{\mS}$.

\begin{definition}[Policy]\label{def:policy}
We will call \emph{policy} any function $\Pi_{\mS}: \mathcal{R}_\mS \rightarrow [0,\pi]^M$ such that $\Phi^* = \Pi_{\mS}(\aT,\bT)$, with $\Phi^*$ being the optimal switching angles, solutions to Problem \ref{pb:SHEp} with target $(\aT,\bT)$.
\end{definition} 

With the aim of reconstructing the policy $\Pi_{\mS}$, a typical approach is to solve numerically Problem \ref{pb:SHE opt} for a limited number of points in $\mathbb{R}^{N_a+N_b}$ and check that the optimal value is zero. Secondly, one interpolates the function $\Pi_{\mS}$ in the convex set generated by the points previously obtained. Nevertheless, this approach has several difficulties and drawbacks.
\begin{itemize}
	\item[1.] \textit{Combinatory problem}: in practice, one does not dispose of a suitable waveform $\mS$ which yields a solution to the Problem \ref{pb:SHEp}. A common approach to solve the SHM problem consists in fixing the number of switches $M$, and then solve Problem \ref{pb:SHE opt} for  all the possible combinations of $M$ elements of $\mathcal{U}$. However, taking into account that the number of possible $M$-tuples  in $\mathcal U$ is of the order $(L-1)^M$, it is evident that the complexity of the above approach increases rapidly when $L>1$. This problem has been studied for instance in \cite{Yang2015,Yang2017} where, through appropriate algebraic transformations, the authors convert the SHM problem into a polynomial system whose solutions' set contains all the possible waveforms $\mathcal S$ of $M$ elements in $\mathcal{U}$. As a drawback of this approach, the number of switches $M$ needs to be prefixed. However, in some cases,  determining the number of switches which are necessary to reach the desired Fourier coefficients is not a straightforward task.
	
	\item[2.] \textit{Solvable set problem}: given a waveform $\mS$, the corresponding solvable set $\mathcal{R}_{\mS}$ is usually very small, yielding to policies $\Pi_{\mS}$ which are not very effective. This issue is typically addressed by solving Problem \ref{pb:SHE opt} for a set of waveforms $\{\mS_l\}_{l=1}^r$ and obtaining different policies $\{\Pi_{\mS_l}\}_{l=1}^r$ and solvable sets $\{\mathcal{R}_{\mS_l}\}_{l=1}^r$ for each one of them. By gathering them, one creates a new policy applicable in a wider range. However, since the solvable sets corresponding to different waveforms may be disjoint or even overlapping, this union of policies may give rise to regions where the solution for the same target $(\aT,\bT)$ is not unique, or even generate regions with no solution at all (see Fig. \ref{fig:chaos_policy}).
	
	\begin{SCfigure}[1.3][h]
		\centering
		\includegraphics[scale=0.3]{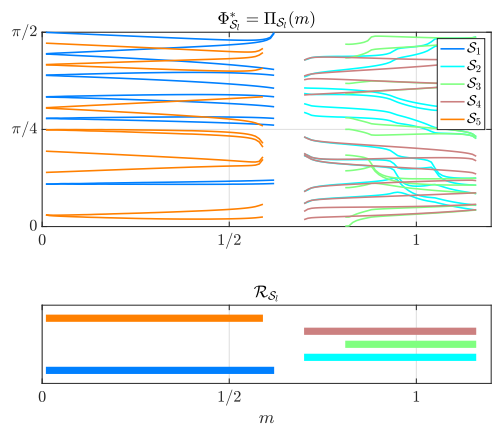}
		\caption{In the first picture, we display the optimal switching angles $\Phi^*_{\mS}$ associated to different waveforms $\{\mS_l\}_{l=1}^7$ for a SHM problem (see Remark \ref{remark:SHE}), considering $\mathcal{E}_a = \{1\}$ and $\mathcal{E}_b = \{1,5,7,11,13,17,19,23,25,29,31\}$. We chose $a_T = m$ for all $m\in [0,1.2]$ and $b_T = (0,\ldots,0)$. The second figure shows the solvable sets for each waveform we considered.}
		\label{fig:chaos_policy}
	\end{SCfigure}
	
	\item[3.] \textit{Policy problem}: due to the complexity of a policy generated by the union of different waveforms, the continuity of the switching angles cannot be guaranteed. This is a well known problem in the SHM community \cite{Agelidis2008,Dahidah2008,Dahidah2015,Yang2017} (see Fig. \ref{fig:chaos_policy}).	
\end{itemize}

As we shall see, all these mentioned criticalities may be overcome by our optimal control approach.

\section{SHM as an optimal control problem}\label{sec:Contributions}

Our main contribution in the present paper consists in formulating the SHM problem as an optimal control one. In this formulation, the Fourier coefficients of the signal $u(t)$ are identified with the terminal state of a controlled dynamical system of $N_a+N_b$ components defined in the time-interval $[0,\pi)$.  The control of the system is precisely the signal $u(t)$, defined as a function $[0,\pi)\to \mathcal{U}$, which has to steer the state from the origin to the desired values of the prescribed Fourier coefficients. The starting point of this approach is to rewrite the Fourier coefficients of the function $u(t)$ as the final state of a dynamical system controlled by $u(t)$. To this end, let us first note that, in view of \eqref{eq:an}, for all $u\in L^\infty ([0,\pi);\mathbb{R})$ any Fourier coefficient $a_j$ satisfies $a_j = y(\pi)$, with $y\in C([0,\pi);\mathbb{R})$ defined as
\begin{align*}
	y(t) = \dfrac{2}{\pi} \int_0^t u(\tau) \cos(j\tau) d\tau.
\end{align*}

Besides, thanks to the fundamental theorem of calculus, $y(\cdot)$ is the unique solution to the differential equation
\begin{equation}\label{eq: ODE Fourier}
	\begin{cases}
		\dot{y} (t) = \dfrac{2}{\pi} \cos(jt) u(t), \qquad  t\in [0,\pi)
		\\[5pt]
		y(0) = 0.
	\end{cases}
\end{equation}

Analogously, we can also write the Fourier coefficients $b_j$, defined in \eqref{eq:an}, as the solution at time $t=\pi$ of a differential equation similar to \eqref{eq: ODE Fourier}.

Hence, for $\mathcal{E}_a$, $\mathcal{E}_b$, $\aT$, and $\bT$ given, the SHM Problem \ref{pb:SHEp} can be reduced to finding a control function $u$ of the form \eqref{eq:uExpl}, satisfying \eqref{eq:staircase prop}, such that the corresponding solution ${\bf y} \in C([0,\pi);\mathbb{R}^{N_a+N_b})$ to the dynamical system
\begin{equation}\label{eq:forward dyn syst}
	\begin{cases}
		\dot{{\bf y}}(t) = \dfrac{2}{\pi} {\bf \mathcal{D}}(t) u(t), \qquad  t\in [0,\pi)
		\\[5pt]
		{\bf y}(0) = 0.
	\end{cases}
\end{equation}
satisfies ${\bf y} (\pi) = [\aT;\bT]^\top$, where
\begin{equation}\label{eq:Dynamics}
	{\bf \mathcal{D}}(t) = \left[{\bf \mathcal{D}}^a(t); {\bf \mathcal{D}}^b(t) \right]^\top, 
\end{equation}
with ${\bf \mathcal{D}}^a(t) \in \mathbb{R}^{N_a} $ and $ {\bf \mathcal{D}}^b(t) \in \mathbb{R}^{N_b}$ given by
\begin{align}\label{eq:DalphaDbeta}
	{\bf \mathcal{D}}^a(t) = \begin{bmatrix} \cos(e_a^1t) \\ \cos(e_a^2t) \\ \vdots \\ \cos(e_a^{N_a}t) \end{bmatrix},
	\quad {\bf \mathcal{D}}^b(t) = \begin{bmatrix} \sin(e_b^1t) \\ \sin(e_b^2t) \\ \vdots \\ \sin(e_b^{N_b}t) \end{bmatrix} 
\end{align}
Here, $e_a^i$ and $e_b^i$ denote the elements in $\mathcal{E}_a$ and $\mathcal{E}_b$, i.e.
\begin{align*}
	\mathcal{E}_a = \{e_a^1,e_a^2,e_a^3,\dots,e_a^{N_a}\}, \quad \mathcal{E}_b = \{e_b^1,e_b^2,e_b^3,\dots,e_b^{N_b}\}.
\end{align*}

In the sequel, and in order to simplify the notation, we reverse the time in \eqref{eq:forward dyn syst} using the transformation ${\bf x} (t) = {\bf y}(\pi - t)$. In this way, the SHM problem turns into the following null controllability one, for a dynamical system with initial condition ${\bf x}(0) = [\aT; \bT]^\top$ (see also Fig. \ref{fig:evolution_x}).

\begin{problem}[SHM via null controllability]\label{pb:SHEpControl}
Let $\mathcal{U}$ be given as in \eqref{eq:Udef}. Let $\mathcal{E}_a$, $\mathcal{E}_b$ and the targets $\aT$ and $\bT$ be given as in Problem \ref{pb:SHEp},  we look for a function $u: [0,\pi)\to [-1,1]$ of the form \eqref{eq:uExpl}, satisfying \eqref{eq:staircase prop}, such that the solution to the initial-value problem
\begin{equation}\label{eq:CauchyReversed}
	\begin{cases}
		\displaystyle\dot{{\bf x}}(t) = -\frac 2\pi{\bf \mathcal{D}}(t)u(t),  & t \in [0,\pi)
		\\[5pt]
		{\bf x}(0) = {\bf x}_0 := [\aT; \bT]
	\end{cases}
\end{equation}
satisfies ${\bf x} (\pi) = 0$, where ${\bf \mathcal{D}}$ is given by \eqref{eq:Dynamics}--\eqref{eq:DalphaDbeta}.
\end{problem}
\begin{SCfigure}[1][h] 
	\centering
	\includegraphics[scale=0.3]{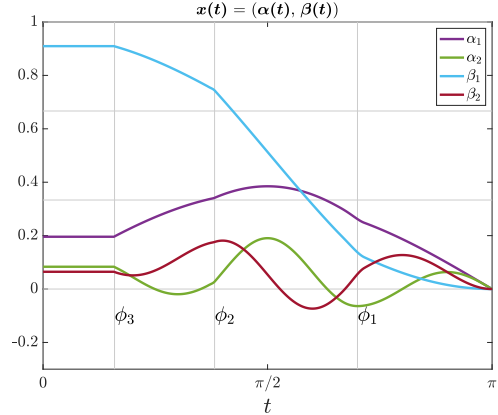}
	\caption{Evolution of the dynamical system \eqref{eq:CauchyReversed} with $\mathcal{E}_a = \{1,2\}$ and $\mathcal{E}_b = \{1,2\}$ corresponding to the control $u$ in Figure \ref{fig:exampleSHE}. The positions of the switching angles ${\bf \phi}$ are displayed as well.}\label{fig:evolution_x}
\end{SCfigure}

A natural approach for null controllability problems such as Problem \ref{pb:SHEpControl} is to formulate them as an optimal control one, where the cost functional to be minimized is the euclidean distance between the final state ${\bf x}(\pi)$ and the origin. In what follows, for a given vector ${\bf v}\in\mathbb{R}^d$, we denote by $\|{\bf v}\|$ the euclidean norm $\|{\bf v}\|_{\mathbb{R}^d}$. Let us introduce the set of admissible controls.
\begin{align*}
	&\mathcal A:= \Big\{u:[0,\pi)\to [-1,1]\; \text{ measurable}\Big\}
	\\
	&\mathcal A_{ad}:= \Big\{u\in\mathcal A \text{ of the form } \eqref{eq:uExpl} \text{ satisfying } \eqref{eq:staircase prop}\Big\}
\end{align*}

\begin{problem}[OCP for SHM]\label{pb:OCP1}
Let $\mathcal{U}$ be a given set as in \eqref{eq:Udef}. Let $\mathcal{E}_a$, $\mathcal{E}_b$ and the targets $\aT$ and $\bT$ be given as in Problem \ref{pb:SHEp}. We look for an admissible control $u\in \mathcal{A}_{ad}$ solution to the following optimal control problem:
\begin{equation*}
	\min_{u \in \mathcal{A}_{ad}}\;\frac 12 \|{\bf x}(\pi)\|^2 \quad \text{subject to the dynamics \eqref{eq:CauchyReversed}}.
\end{equation*}
\end{problem} 

\begin{remark}
Note that the cost functional in Problem \ref{pb:OCP1} is quadratic and, therefore, the existence of at least one minimizer is ensured for any target $[\aT,\bT]^\top$. However, we point out that such a minimizer is a solution to the SHM problem if and only if the minimum is equal to zero. When it is not the case, we say that the target $[\aT,\bT]^\top$ is unreachable, and then the SHM problem \ref{pb:SHEp} (resp. Problem \ref{pb:SHEpControl}) has no solution. In this work, we will not discuss the reachable set for the control problem \ref{pb:SHEpControl}.
\end{remark}

A main feature of the SHM problem is that we are looking for signal functions $u$ of the form \eqref{eq:uExpl} satisfying \eqref{eq:staircase prop}. In principle, this can be directly added as a constraint in the set of admissible controls $\mathcal{A}_{ad}$ as we did in Problem \ref{pb:OCP1}. However, considering an optimization problem in a non-convex set is not quite desirable. Indeed, it is well-known that mathematical optimization, in general, is an NP-hard problem, whereas for the case of convex optimization, algorithms with a polynomial computational time are available, as for instance, the interior point method \cite{helmberg1996interior}, the projected gradient descent \cite{calamai1987projected} or the penalty method \cite{eremin1967penalty}. In order to bypass this difficulty, we propose a variant of Problem \ref{pb:OCP1}, adding a penalization term for the control to the cost functional, and removing the staircase constraint on the control. 

\begin{problem}[Penalized OCP for SHM]\label{pb:OCP_penalizado}
Fix $\varepsilon>0$ and a convex function $\mathcal{L}\in C([-1,1];\mathbb{R})$.  Let $\mathcal{E}_a$, $\mathcal{E}_b$ and the targets $\aT$ and $\bT$ be given as in Problem \ref{pb:SHEp}. We look for a control $u\in \mathcal A$ solution to the following optimal control problem:
\begin{align*}
	&\displaystyle\min_{u \in \mathcal A}\;\left(\dfrac{1}{2} \|{\bf x}(\pi)\|^2 + \varepsilon \displaystyle\int_0^\pi \mathcal{L}(u(t)) dt\right) 
	\\[5pt] 
	&\text{subject to the dynamics \eqref{eq:CauchyReversed}}.
\end{align*}
\end{problem}

Observe that, in Problem \ref{pb:OCP_penalizado}, we do not impose the constraint that the control has to be of the form \eqref{eq:uExpl}, satisfying the staircase property \eqref{eq:staircase prop}. Nevertheless, as we shall see, these features of $u$ will arise naturally in the solution to Problem \ref{pb:OCP_penalizado}, from a suitable choice of the penalization term $\mathcal{L}$. 

Another important advantage of adding a penalization term for the control is that, as we shall prove in Theorems \ref{th:bang-bang} and \ref{th:PLP}, it ensures the uniqueness for the solution, and its the continuity with respect to the targets $\aT$ and $\bT$. 

On the contrary, one needs to take into account that the penalization term for the control might prevent the optimal trajectory from reaching the target. In other words, even if there exists a control for which the optimal trajectory satisfies ${\bf x}(\pi) = 0$, the optimal control in Problem \ref{pb:OCP_penalizado} might not do so, and therefore, the solution to Problem \ref{pb:OCP_penalizado} would not be a solution to the SHM problem. This issue may be controlled by a proper selection of the weighting parameter $\varepsilon$ which allows to tune the precision of the optimal control for the perturbed problem, guaranteeing that the final state of the optimal trajectory is close enough to zero. As a matter of fact, we can prove the following proposition.

\begin{proposition}\label{Prop:approx controllability}
Assume that $[\aT,\bT]^\top$ is such that Problem \ref{pb:SHEpControl} admits a solution, and let $u^\ast\in \mathcal A$ be the solution to Problem \ref{pb:OCP_penalizado}. Then the associated trajectory ${\bf x}^\ast\in C([0,\pi);\mathbb{R}^{N_a+N_b})$, solution to \eqref{eq:CauchyReversed}, satisfies
\begin{align*} 
	\|{\bf x}^\ast (\pi)  \|^2 \leq  4 \varepsilon \pi \| \mathcal{L}\|_\infty,
\end{align*}
where $\| \cdot\|_\infty$ denotes the max-norm in $C([-1,1]; \mathbb{R})$.
\end{proposition}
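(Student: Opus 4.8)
The plan is to compare the optimal control $u^\ast$ for the penalized problem (Problem \ref{pb:OCP_penalizado}) with a control $\bar u$ that solves the null controllability problem (Problem \ref{pb:SHEpControl}), exploiting the minimality of $u^\ast$ for the penalized cost functional. First I would fix such a $\bar u\in\mathcal A$ (which exists by hypothesis), with associated trajectory $\bar{\bf x}$ satisfying $\bar{\bf x}(\pi)=0$. Evaluating the penalized functional at $\bar u$ gives
\begin{align*}
	\frac12\|\bar{\bf x}(\pi)\|^2 + \varepsilon\int_0^\pi \mathcal{L}(\bar u(t))\,dt = \varepsilon\int_0^\pi \mathcal{L}(\bar u(t))\,dt \le \varepsilon\pi\|\mathcal{L}\|_\infty.
\end{align*}

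By optimality of $u^\ast$, the value of the penalized functional at $u^\ast$ is no larger than this, so
\begin{align*}
	\frac12\|{\bf x}^\ast(\pi)\|^2 + \varepsilon\int_0^\pi \mathcal{L}(u^\ast(t))\,dt \le \varepsilon\pi\|\mathcal{L}\|_\infty.
\end{align*}
The next step is to bound the penalization term at $u^\ast$ from below: since $u^\ast(t)\in[-1,1]$ and $\mathcal{L}\in C([-1,1];\mathbb{R})$, one has $\mathcal{L}(u^\ast(t)) \ge -\|\mathcal{L}\|_\infty$, hence $\varepsilon\int_0^\pi \mathcal{L}(u^\ast(t))\,dt \ge -\varepsilon\pi\|\mathcal{L}\|_\infty$. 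Combining the two inequalities yields
\begin{align*}
	\frac12\|{\bf x}^\ast(\pi)\|^2 \le \varepsilon\pi\|\mathcal{L}\|_\infty + \varepsilon\pi\|\mathcal{L}\|_\infty = 2\varepsilon\pi\|\mathcal{L}\|_\infty,
\end{align*}
which is exactly $\|{\bf x}^\ast(\pi)\|^2 \le 4\varepsilon\pi\|\mathcal{L}\|_\infty$, as claimed.

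This argument is essentially a one-line comparison, so there is no serious obstacle; the only points deserving a word of care are that the existence of a minimizer $u^\ast$ for Problem \ref{pb:OCP_penalizado} is legitimate (the cost is lower semicontinuous and coercive on the weak-$\ast$ compact set $\mathcal A$, with $\mathcal{L}$ convex and continuous, so this is standard and can be invoked or proved separately), and that the trajectory map $u\mapsto{\bf x}(\pi)$ is well defined and the functional finite — both immediate from $u\in L^\infty$ and the boundedness of $\mathcal{D}(\cdot)$. I would also note that the constant is not claimed to be sharp; any crude two-sided bound on $\mathcal{L}$ over $[-1,1]$ suffices, and using $\mathrm{osc}(\mathcal{L})$ instead of $2\|\mathcal{L}\|_\infty$ would give a slightly better constant, but the stated form is cleanest.
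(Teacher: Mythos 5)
Your proposal is correct and follows the same route as the paper: compare the penalized cost of $u^\ast$ with that of a control $\tilde u$ steering the state to zero (which exists by hypothesis), and then bound both integrals of $\mathcal{L}$ by $\pm\varepsilon\pi\|\mathcal{L}\|_\infty$ to obtain the factor $4$. Your write-up simply makes explicit the last elementary estimate that the paper leaves as ``we deduce''.
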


\begin{proof}
Since we are supposing that Problem \ref{pb:SHEpControl} has a solution, there exists a control $\tilde{u}\in \mathcal{A}_{ad}$ such that its corresponding trajectory $\tilde{\bf x}$, solution to \eqref{eq:CauchyReversed}, satisfies $\tilde{\bf x}(\pi) = 0$. 
	
Now, let $u^\ast\in \mathcal{A}$ be the solution to Problem \ref{pb:OCP_penalizado}, and let ${\bf x}^\ast$ be its corresponding trajectory. By the optimality of $u^\ast$ we have
\begin{align*}
	\frac{1}{2} \|{\bf x}^\ast(\pi)\|^2 +\varepsilon \int_0^\pi \mathcal{L}(u^\ast(\tau))d\tau \leq \varepsilon \int_0^\pi \mathcal{L}(\tilde{u}(\tau))d\tau,
\end{align*}
and hence, we deduce that $\|{\bf x}^\ast (\pi)\|^2 \leq 4 \varepsilon \pi \| \mathcal{L}\|_\infty.$
\end{proof}

Let us now describe the construction of penalization functions $\mathcal{L}$ which guarantee that any solution to Problem \ref{pb:OCP_penalizado} has the form \eqref{eq:uExpl} and satisfies \eqref{eq:staircase prop}. To this end, we will distinguish two cases, depending on the cardinality of $\mathcal{U}$.

\subsection{Bilevel SHM via OCP (Bang-Bang Control)} 

In this case, the control set $\mathcal{U}$ defined in \eqref{eq:Udef} has only two elements, i.e.  $\mathcal{U}=\{-1,1\}$. In the control theory literature, a control taking only two values is known as \emph{bang-bang control}. In the SHM literature, this kind of solution are called \textit{bi-level solutions}. Note that in this case, any $u$ with the form \eqref{eq:uExpl}  trivially satisfies the staircase property \eqref{eq:staircase prop}.

\begin{theorem}\label{th:bang-bang}
Let $\mathcal{U}=\{ -1, 1\}$, and ${\bf x}_0$ be given. For some $\alpha\in \mathbb{R}$ with $\alpha\neq 0$, consider the Problem \ref{pb:OCP_penalizado} with $\mathcal{L} (u) = \alpha\, u$. Then, the optimal control $u^\ast$, solution to Problem \ref{pb:OCP_penalizado} is unique and has a bang-bang structure, i.e. it is of the form \eqref{eq:uExpl}. In addition to that, the solution $u^\ast$ to Problem \ref{pb:OCP_penalizado} is continuous  with respect to ${\bf x}_0$ in the strong topology of $L^1(0,\pi)$.
\end{theorem}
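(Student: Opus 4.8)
The plan is to apply Pontryagin's Maximum Principle (PMP) to the optimal control problem and read off the structure of the optimal control from the sign of a switching function. For the dynamics \eqref{eq:CauchyReversed} with running cost $\varepsilon\alpha\, u$ and terminal cost $\frac12\|{\bf x}(\pi)\|^2$, the Hamiltonian is $H({\bf x},{\bf p},u) = -\frac{2}{\pi}\langle {\bf p}, {\bf \mathcal{D}}(t)\rangle u + \varepsilon\alpha\, u$, which is \emph{affine} in the control $u$. The adjoint state ${\bf p}$ solves $\dot{\bf p}(t) = 0$ with transversality condition ${\bf p}(\pi) = -{\bf x}^\ast(\pi)$, so ${\bf p}(t) \equiv -{\bf x}^\ast(\pi) =: {\bf p}_0$ is constant. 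Minimizing the Hamiltonian pointwise over $u\in[-1,1]$ forces
\begin{equation*}
u^\ast(t) = \operatorname{sign}\!\left(\frac{2}{\pi}\langle {\bf p}_0, {\bf \mathcal{D}}(t)\rangle - \varepsilon\alpha\right)
\end{equation*}
wherever the argument is nonzero. Defining the switching function $\psi(t) := \frac{2}{\pi}\langle {\bf p}_0, {\bf \mathcal{D}}(t)\rangle - \varepsilon\alpha$, the components of ${\bf \mathcal{D}}(t)$ are $\cos(e_a^i t)$ and $\sin(e_b^i t)$ with $e_a^i, e_b^i$ odd integers; hence $\psi$ is a real-analytic (indeed trigonometric-polynomial) function of $t$ which is not identically constant unless ${\bf p}_0 = 0$. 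The first step is therefore to rule out the singular case: if ${\bf p}_0 = 0$ then ${\bf x}^\ast(\pi) = 0$ and the running cost $\varepsilon\alpha\int_0^\pi u\,dt$ would be minimized by $u\equiv -\operatorname{sign}(\alpha)$, whose trajectory does not reach zero (a short direct computation, or an appeal to the fact that a constant control cannot annihilate all prescribed Fourier modes unless ${\bf x}_0$ is special) — so in the generic/nontrivial situation $\psi$ has only finitely many zeros in $[0,\pi)$ by analyticity, and $u^\ast$ is piecewise constant taking values $\pm1$, i.e. of the form \eqref{eq:uExpl}. (One should handle carefully the borderline where $\psi$ vanishes on a set of positive measure; analyticity of $\psi$ excludes this unless $\psi\equiv 0$, which again forces $\varepsilon\alpha = 0$, contradicting $\alpha\neq 0$.)

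\textbf{Uniqueness.} For uniqueness I would argue as follows. The switching function $\psi$ depends only on ${\bf p}_0 = -{\bf x}^\ast(\pi)$, and by the dynamics ${\bf x}^\ast(\pi) = {\bf x}_0 - \frac{2}{\pi}\int_0^\pi {\bf \mathcal{D}}(t)u^\ast(t)\,dt$; substituting $u^\ast(t) = \operatorname{sign}(\psi(t))$ gives a fixed-point equation for ${\bf p}_0$. To see this fixed point is unique, note that the terminal cost is strictly convex in ${\bf x}(\pi)$ and the map $u\mapsto {\bf x}(\pi)$ is affine, so $u\mapsto \frac12\|{\bf x}(\pi)\|^2$ is convex in $u$, and $u\mapsto \varepsilon\alpha\int u$ is linear; hence the functional is convex on the convex set $\mathcal{A}$. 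If two distinct minimizers $u_1^\ast, u_2^\ast$ existed, convexity plus strict convexity of $\|\cdot\|^2$ would force ${\bf x}_1^\ast(\pi) = {\bf x}_2^\ast(\pi)$, hence the same ${\bf p}_0$, hence the same switching function $\psi$, hence $u_1^\ast = u_2^\ast = \operatorname{sign}(\psi)$ a.e. (the set $\{\psi = 0\}$ having measure zero by analyticity) — a contradiction.

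\textbf{Continuity in $L^1$.} For the stability statement, let ${\bf x}_0^n \to {\bf x}_0$ and let $u^n$ be the corresponding unique optimal controls with terminal states ${\bf x}^n(\pi)$ and adjoint data ${\bf p}_0^n = -{\bf x}^n(\pi)$. First I would show the sequence ${\bf p}_0^n$ is bounded (from the a priori bound of Proposition \ref{Prop:approx controllability}-type estimates, or simply from $\|{\bf x}^n(\pi)\| \le \|{\bf x}_0^n\| + 2\sqrt{N_a+N_b}$) and extract a convergent subsequence ${\bf p}_0^n \to \bar{\bf p}_0$; passing to the limit in the fixed-point equation and using that $u^n = \operatorname{sign}(\psi^n) \to \operatorname{sign}(\bar\psi)$ pointwise a.e. (since $\psi^n \to \bar\psi$ uniformly, and $\bar\psi$ has finitely many zeros), dominated convergence gives $\|u^n - \bar u\|_{L^1}\to 0$ where $\bar u := \operatorname{sign}(\bar\psi)$. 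A standard lower-semicontinuity/optimality argument identifies $\bar u$ as \emph{the} optimal control for ${\bf x}_0$, and uniqueness upgrades subsequential convergence to convergence of the full sequence. The main obstacle — and the step needing the most care — is precisely this last analytic point: controlling the zero set of the switching function uniformly along the sequence, i.e. ensuring that small perturbations of ${\bf p}_0$ do not create a switching function that degenerates (vanishes on a positive-measure set) or whose zeros accumulate; here the key fact is that $\psi$ and $\psi^n$ are trigonometric polynomials of a fixed degree $\max(\mathcal{E}_a\cup\mathcal{E}_b)$, so the number of zeros in $[0,\pi)$ is bounded uniformly, and $\psi^n\equiv 0$ is impossible as long as $\varepsilon\alpha\neq 0$. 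This uniform-degree bound is what makes the pointwise-a.e. convergence, and hence the $L^1$ convergence, go through.
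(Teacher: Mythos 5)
Your proof is correct in substance. For the bang-bang structure and the uniqueness you follow essentially the same route as the paper: Pontryagin's principle with a control-affine Hamiltonian, a constant adjoint equal (up to sign convention) to ${\bf x}^\ast(\pi)$, a switching function which is a trigonometric polynomial shifted by $\varepsilon\alpha\neq 0$ and therefore either a nonzero constant or a function with finitely many zeros in $[0,\pi)$, and uniqueness by combining strict convexity of the terminal cost (forcing equal terminal states for two minimizers) with the fact that the maximum principle determines the control a.e.\ from ${\bf x}^\ast(\pi)$. Two small remarks: with the Hamiltonian written as $\varepsilon\alpha u-\tfrac2\pi\langle{\bf p},\mathcal{D}(t)\rangle u$ and pointwise \emph{minimization} of $\mathcal H$, the transversality condition is ${\bf p}(\pi)=+{\bf x}^\ast(\pi)$, not $-{\bf x}^\ast(\pi)$ (this only flips a sign in your formula for $u^\ast$ and does not affect the structure of the argument); and there is no need to ``rule out'' the case ${\bf x}^\ast(\pi)=0$: in that case the switching function is the nonzero constant $-\varepsilon\alpha$, the optimal control is constant, and a constant control is still of the form \eqref{eq:uExpl} with no switches, which is exactly how the paper treats it.

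Where you genuinely diverge from the paper is the continuity part. The paper proves it by a stability-of-minimizers argument: the uniform bound on the number of switches gives a uniform $BV$ bound on all optimal controls, Lemma \ref{lem: compactness} ($BV$-compactness in $L^1$ plus uniqueness of the minimizer) yields a quantitative gap $\eta>0$ of the cost on the $L^1$-sphere $\|u-u_0^\ast\|_{L^1}=\gamma$, convexity extends this gap to the exterior of the ball, and the Lipschitz dependence of $F(u,\cdot)$ on the initial datum (uniform in $u$) then forces the perturbed minimizer into the $\gamma$-ball. You instead argue sequentially through the adjoints: boundedness of ${\bf x}^n(\pi)$, extraction of a convergent subsequence, uniform convergence of the switching functions, a.e.\ convergence of their signs (the limit switching function cannot vanish identically because $\varepsilon\alpha\neq 0$, so its zero set is finite), dominated convergence in $L^1$, identification of the limit as the optimizer for ${\bf x}_0$ via continuity of the cost and uniqueness, and the subsequence principle to get the full sequence. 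Both arguments hinge on the same structural fact, namely that optimal controls are signs of trigonometric switching functions of fixed maximal frequency; yours trades the $BV$-compactness lemma for finite-dimensional compactness of the adjoint vector together with the explicit sign formula, which is arguably more elementary and yields sequential continuity directly, while the paper's version produces an explicit $\gamma$--$\delta$ statement and, because it never uses the explicit sign representation, applies verbatim to the multilevel case of Theorem \ref{th:PLP} as well.
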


The proof of Theorem \ref{th:bang-bang} is postponed to Section \ref{sec:Proof}, and follows from the optimality conditions given by the Pontryagin's maximum principle. In particular, the linearity of $\mathcal{L}$ and of the dynamical system \eqref{eq:CauchyReversed}, implies that the associated Hamiltonian is also linear, and then, it always attains its minimum at the limits of the interval $[-1,1]$.

We point out that, by choosing different penalizations $\mathcal{L}$, we may obtain solutions to the SHM problem with different waveforms due to the change of the Hamiltonian. See for instance Fig. \ref{fig:Bang-Bang-penalization}, where we have chosen $\mathcal{L}(u) = \pm u$.

\subsection{Multilevel SHM problem via OCP}

Inspired by the ideas of the previous subsection, we can address the case when $\mathcal{U}$ contains more than two elements. This is known in the power electronics literature as the \textit{multilevel SHM problem}. Now, the goal is to construct a function $\mathcal{L}$ such that the Hamiltonian associated to Problem \ref{pb:OCP_penalizado} always attains the minimum at points in $\mathcal{U}$. A way to construct such a function $\mathcal{L}$ is to interpolate a parabola in $[-1,1]$ by affine functions, considering the elements in $\mathcal{U}$ as the interpolating points. Since between any two points in $\mathcal{U}$,  the function $\mathcal{L}$ is a straight line,  the Hamiltonian is a concave function in these intervals, and hence, the minimum is always attained at points in $\mathcal{U}$.

\begin{theorem}\label{th:PLP}
Let ${\bf x}_0$ be given, and let $\mathcal{U}$ be a given set as in \eqref{eq:Udef}. For any $\alpha>0$ and $\beta\in \mathbb{R}$, set the function
\begin{align}\label{eq:parabola}
	\mathcal{P}(u) = \alpha (u-\beta)^2.
\end{align}
Consider Problem \ref{pb:OCP_penalizado} with 
\begin{align}\label{eq:PLP}
	\mathcal{L}(u) = \begin{cases}
	\lambda_k(u) & \text{if }  u \in [u_k,u_{k+1}) \\ \mathcal{P}(1) & \text{if } u = u_{L} 
	\end{cases} \quad \text{for all } k \in \{1,\dots,L-1\}, 
\end{align}
where 
\begin{align}\label{eq:lambda k}
	\lambda_k(u):= \dfrac{ (u-u_k)\mathcal{P}(u_{k+1}) + (u_{k+1}- u) \mathcal{P}(u_k)}{u_{k+1} - u_k}.
\end{align}
Assume in addition that $\mathcal{L}$ has a unique minimum in $[-1,1]$. Then, the optimal control $u^\ast$, solution to Problem \ref{pb:OCP_penalizado}, is unique and has the form \eqref{eq:uExpl} satisfying \eqref{eq:staircase prop}. Moreover, the solution $u^\ast$ to Problem \ref{pb:OCP_penalizado} is continuous with respect to ${\bf x}_0$ in the strong topology of $L^1(0,\pi)$.
\end{theorem}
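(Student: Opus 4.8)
The plan is to follow the route of Theorem~\ref{th:bang-bang}, i.e.\ to read off the structure of $u^\ast$ from Pontryagin's maximum principle, the new ingredient being the piecewise affine shape of the penalization \eqref{eq:PLP}. Existence of a minimizer $u^\ast$ of Problem~\ref{pb:OCP_penalizado} is a direct-method statement: $\mathcal A$ is weak-$\ast$ sequentially compact in $L^\infty(0,\pi)$, the map $u\mapsto{\bf x}(\pi)$ is affine and weak-$\ast$ continuous (the dynamics \eqref{eq:CauchyReversed} being linear in $u$), hence $u\mapsto\frac{1}{2}\|{\bf x}(\pi)\|^2$ is weak-$\ast$ continuous, while $u\mapsto\varepsilon\int_0^\pi\mathcal L(u)$ is weak-$\ast$ lower semicontinuous because $\mathcal L$ is convex. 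The adjoint equation of \eqref{eq:CauchyReversed} is $\dot{\bf p}\equiv 0$, so the costate is a constant vector ${\bf p}$, fixed by the transversality condition and proportional to ${\bf x}^\ast(\pi)$; in particular ${\bf p}=0$ if and only if ${\bf x}^\ast(\pi)=0$. Setting the switching function $\varphi(t):=-\frac{2}{\pi}\langle{\bf p},{\bf \mathcal{D}}(t)\rangle$, the maximum principle forces
\begin{equation*}
	u^\ast(t)\ \in\ \argmin_{u\in[-1,1]}\Big(\varphi(t)\,u+\varepsilon\,\mathcal L(u)\Big)\qquad\text{for a.e. } t\in[0,\pi).
\end{equation*}

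The heart of the argument is one-dimensional. By \eqref{eq:PLP}--\eqref{eq:lambda k}, $\mathcal L$ is convex and piecewise affine with breakpoints exactly at the elements of $\mathcal U$; its slope on $[u_k,u_{k+1}]$ equals $\sigma_k=\alpha(u_k+u_{k+1}-2\beta)$, and these slopes are strictly increasing in $k$ because $\alpha>0$. The assumption that $\mathcal L$ has a unique minimum on $[-1,1]$ is equivalent to $\sigma_k\neq 0$ for all $k$. For a fixed value $\varphi$, the map $u\mapsto\varphi u+\varepsilon\mathcal L(u)$ is affine on each $[u_k,u_{k+1}]$, so its minimum over $[-1,1]$ is attained at a point of $\mathcal U$, and this minimizer is unique unless $\varphi$ belongs to the finite set $\Theta:=\{-\varepsilon\sigma_1,\dots,-\varepsilon\sigma_{L-1}\}$. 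First I would record all this in a monotone step map $\Gamma\colon\mathbb R\setminus\Theta\to\mathcal U$ sending $\varphi$ to this unique minimizer: $\Gamma$ is locally constant, and as $\varphi$ crosses a single threshold $-\varepsilon\sigma_k$ its value jumps precisely between the consecutive points $u_k$ and $u_{k+1}$.

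With $\Gamma$ in hand the structural claims follow. The switching function $\varphi$ is a finite linear combination of $\cos(e_a^it)$ and $\sin(e_b^it)$, all frequencies being nonzero, hence real-analytic, and it is constant only when ${\bf p}=0$. If ${\bf x}^\ast(\pi)=0$, then $\varphi\equiv 0\notin\Theta$ (recall $\sigma_k\neq 0$) and $u^\ast\equiv\Gamma(0)\in\mathcal U$ is constant, trivially of the form \eqref{eq:uExpl} and trivially satisfying \eqref{eq:staircase prop}. Otherwise $\varphi$ is a non-constant real-analytic function, so $\{t:\varphi(t)\in\Theta\}$ is finite; off this finite set $u^\ast=\Gamma\circ\varphi$ is a step function with finitely many switches, each switch occurring between consecutive elements of $\mathcal U$, which is exactly \eqref{eq:uExpl} together with the staircase property \eqref{eq:staircase prop} of Definition~\ref{def:staircase prop}. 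Uniqueness is then short: the cost in Problem~\ref{pb:OCP_penalizado} is convex in $u$ and $v\mapsto\frac{1}{2}\|v\|^2$ is strictly convex, so any two minimizers produce the same terminal state ${\bf x}^\ast(\pi)$, hence the same costate ${\bf p}$ and the same $\varphi$, and therefore the same control $u^\ast=\Gamma\circ\varphi$ a.e.

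For continuity in ${\bf x}_0$, let ${\bf x}_0^n\to{\bf x}_0$ with optimal controls $u_n^\ast$. A routine direct-method/lower-semicontinuity argument together with the uniqueness just established shows that every weak-$\ast$ limit point of the bounded sequence $(u_n^\ast)$ is the minimizer of the limit problem, so $u_n^\ast\rightharpoonup u^\ast$ weakly-$\ast$ in $L^\infty(0,\pi)$; consequently ${\bf x}_n^\ast(\pi)={\bf x}_0^n-\frac{2}{\pi}\int_0^\pi{\bf \mathcal{D}}(t)u_n^\ast(t)\,dt\to{\bf x}^\ast(\pi)$, hence ${\bf p}_n\to{\bf p}$ and $\varphi_n\to\varphi$ uniformly on $[0,\pi]$. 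Since $\{t:\varphi(t)\in\Theta\}$ is negligible and $\Gamma$ is locally constant off $\Theta$, for a.e.\ $t$ the value $\varphi(t)$ lies at positive distance from $\Theta$, whence $u_n^\ast(t)=\Gamma(\varphi_n(t))=\Gamma(\varphi(t))=u^\ast(t)$ for $n$ large, and dominated convergence ($|u_n^\ast|\le 1$) gives $u_n^\ast\to u^\ast$ in $L^1(0,\pi)$. The hard part will be exactly this last upgrade: weak-$\ast$ convergence does not by itself give strong $L^1$ convergence, and one genuinely needs the representation $u^\ast=\Gamma\circ\varphi$, the convergence of the finite-dimensional costates, and the analyticity of $\varphi$; some bookkeeping is also required on the measure-zero set where the Hamiltonian minimizer degenerates, and in checking that the unique-minimum hypothesis on $\mathcal L$ rules out vanishing slopes.
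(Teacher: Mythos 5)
Your existence, Pontryagin, staircase-structure and uniqueness steps coincide in substance with the paper's proof: the paper carries out the same one-dimensional analysis through the subdifferential of $\mathcal{J}(u,\mu)=\varepsilon\mathcal{L}(u)-\mu u$, whose thresholds $p_k=\big(\mathcal{P}(u_{k+1})-\mathcal{P}(u_k)\big)/(u_{k+1}-u_k)$ are exactly your slopes $\sigma_k=\alpha(u_k+u_{k+1}-2\beta)$, it invokes the unique-minimum hypothesis in the same degenerate case $\mu^\ast\equiv 0$ (your remark that this hypothesis is equivalent to $\sigma_k\neq 0$ for all $k$ is correct and slightly more explicit than what the paper records), and its uniqueness proof is the same strict-convexity argument, merely organized as a contradiction via the midpoint control. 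Where you genuinely diverge is the $L^1$-continuity in ${\bf x}_0$. The paper proves an $\varepsilon$--$\delta$ statement: it first bounds the number of switches of every optimal control uniformly in the initial datum, so that all optimal controls lie in a fixed $BV$ ball, then uses the compact embedding of $BV(0,\pi)$ into $L^1(0,\pi)$ (Lemma \ref{lem: compactness}) to get a quantitative gap $F(u,{\bf x}_0)\geq F(u_0^\ast,{\bf x}_0)+\eta$ on the $L^1$-sphere of radius $\gamma$ around $u_0^\ast$, extends this gap outside the sphere by convexity of $F(\cdot,{\bf x}_0)$, and finally perturbs the initial condition using the Lipschitz dependence of $F(u,\cdot)$, uniform in $u$. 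You instead prove sequential continuity: weak-$\ast$ convergence of the optimal controls to the unique minimizer of the limit problem, hence convergence of the terminal states and of the constant costates, uniform convergence of the switching functions $\varphi_n\to\varphi$, and then pointwise a.e.\ identification $u_n^\ast=\Gamma\circ\varphi_n\to\Gamma\circ\varphi=u^\ast$ (the exceptional sets being a countable union of finite sets, since each $\varphi_n$ is either identically zero, with $0\notin\Theta$, or non-constant analytic) followed by dominated convergence; this is equivalent to the stated continuity. Both routes are complete. Yours is more hands-on: it avoids the $BV$ machinery and the compactness lemma entirely and in fact delivers pointwise a.e.\ convergence of the controls and of their switching structure; the paper's argument is quantitative (it produces the margin $\eta$ and an explicit smallness condition $\delta<\eta/(4C_F)$) and only needs the uniform bound on the number of switches, not the convergence of the adjoint states or of the switching functions.
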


The assumption of $\mathcal{L}$ having a unique minimum in $[-1,1]$ is actually necessary to ensure the staircase form \eqref{eq:uExpl} for the solution. Not assuming this hypothesis would entail the possibility of having continuous solutions for specific targets. See Fig. \ref{fig:sim-multi-level-par} for an illustration of this pathology. Nevertheless, the assumption of $\mathcal{L}$ having a unique minimizer can be easily ensured by choosing, for instance, $\beta=\pm 1$.

\begin{remark}
For completeness, we shall mention that, in Theorem \ref{th:PLP}, $\mathcal L$ can actually have a more general form, still yielding to a staircase optimal control $u^\ast$. Indeed, as we shall see in Section \ref{sec:Proof}, the proof of Theorem \ref{th:PLP} does not use the fact that $\mathcal P$ is a parabola. If we replace it with any other strictly convex function, our result remains valid. The choice we made of defining $\mathcal P$ as in \eqref{eq:parabola} is motivated by the fact that, most often, in optimal control theory the penalization terms are chosen to be quadratic.
\end{remark}

\begin{remark}[\emph{Bang-off-bang control}]
We note that when  $\mathcal{U}= \{-1,0,1\}$, we can just use the $L^1$-norm of the control as penalization, i.e. $\mathcal{L}(u) = \vert u\vert$. This yields to the so-called \emph{bang-off-bang} controls, that are widely studied in the literature \cite{nagahara2013maximum,ikeda2016maximum}. By taking a different parabola $\mathcal{P}$, one can then obtain different bang-off-bang solutions to the SHM problem.
\end{remark}

We illustrate in Fig. \ref{fig:examples_penalizations} different examples of penalization functions $\mathcal{L}$ giving rise to multilevel solutions to the SHM problem. We point out that, by varying the values of $\alpha$ and $\beta$ in Theorem \ref{th:PLP}, we can obtain solutions with different waveforms.
\begin{SCfigure}[1.2][h]
	\centering
	\includegraphics[scale=0.35]{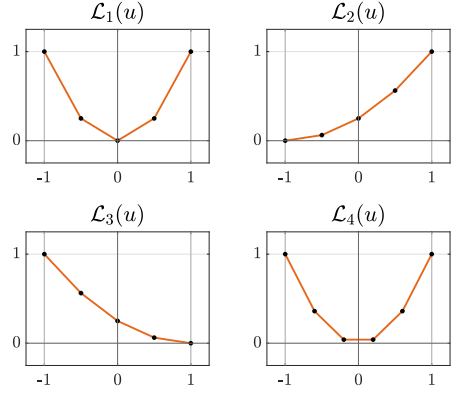}
	\caption{Some examples of convex piecewise affine penalization functions $\mathcal{L}$. The examples $ \mathcal {L}_1 $, $ \mathcal{L}_2$ and $ \mathcal{L}_3 $ satisfy the hypotheses of Theorem \ref{th:PLP}. On the contrary,  the function $\mathcal{L}_4 $ has not a unique minimizer, and then, we cannot ensure that the solution has staircase form.}
	\label{fig:examples_penalizations}
\end{SCfigure}

\section{Proofs of Theorems \ref{th:bang-bang} and \ref{th:PLP}}\label{sec:Proof}

We give here the proofs of the main results of this paper, i.e. Theorems \ref{th:bang-bang} and \ref{th:PLP}. 

At this regards, we notice that the existence of a minimizer, solution to Problem \ref{pb:OCP_penalizado}, can be easily proved employing the direct method in calculus of variations. Indeed, observe that the dynamical system \eqref{eq:CauchyReversed} is linear and the admissible controls in $\mathcal{A}$ are uniformly bounded. Moreover, the functional to be minimized is convex with respect to the control, which suffices to ensure its weak lower semicontinuity, allowing us to pass to the limit in the minimizing sequence.

For the sake of readability, we organize the rest of the proofs as follows: in subsection \ref{sec:opti cond}, we deduce the necessary optimality conditions from Pontryagin's Maximum Principle; in subsection \ref{sec: proof:bang-bang}, we prove that, when $\mathcal{L}$ is given as in Theorem \ref{th:bang-bang}, the solutions to Problem \ref{pb:OCP_penalizado} are bang-bang; in subsection \ref{sec: proof:PLP} we prove the analogous result for Theorem \ref{th:PLP}. Finally, in subsection \ref{sec: uniqueness continuity} we give the proof of uniqueness and continuity of the solution to Problem \ref{pb:OCP_penalizado} with respect to the initial condition, when the penalization term $\mathcal{L}$ is given as in Theorems \ref{th:bang-bang} or \ref{th:PLP}.

\subsection{Optimality conditions}\label{sec:opti cond}

The proofs of Theorems \ref{th:bang-bang} and \ref{th:PLP} are based on the optimality conditions for Problem \ref{pb:OCP_penalizado}, which can be deduced by means of Pontryagin's maximum principle \cite[Chapter~2.7]{bryson1975applied}. To this end, let us first introduce the Hamiltonian  associated to the Optimal Control Problem \ref{pb:OCP_penalizado}:
\begin{align}\label{eq:hamil}
	\mathcal{H}(t,{\bf p},u) = \varepsilon \mathcal{L}(u) - \frac 2\pi\big({\bf p} \cdot {\bf \mathcal{D}}(t)\big)u(t),
\end{align}
where ${\bf p}\in \mathbb{R}^{N_a+N_b}$ is the so-called adjoint variable, and arises from the restriction imposed by the dynamical system \eqref{eq:CauchyReversed}. In view of the definition of ${\bf \mathcal{D}}(t)$ in \eqref{eq:Dynamics}-\eqref{eq:DalphaDbeta}, we will sometimes write the state and the adjoint variables using the following notation:
\begin{align*}
	{\bf x}(t) = \begin{bmatrix} {\bf a}(t), {\bf b}(t) \end{bmatrix}^\top \quad \text{and}\quad
	{\bf p}(t) = \begin{bmatrix} {\bf p}^a(t), {\bf p}^b(t) \end{bmatrix}^\top.
\end{align*}

Now, let us derive the optimality conditions arising from Pontryagin's Maximum Principle.
\begin{itemize}
	\item[1.] \textbf{The adjoint system}: for any $u^\ast \in \mathcal{A}$ solution to Problem \ref{pb:OCP_penalizado}, there exists a unique adjoint trajectory ${\bf p}^\ast\in C([0,\pi); \mathbb{R}^{N_a+N_b})$ which satisfies the following terminal-value problem
	\begin{equation*}
		\begin{cases}
			\dot{\bf p}^\ast(t) = -\nabla_x \mathcal{H}(u(t),{\bf p}^\ast(t),t), \qquad t \in [0,\pi)
			\\[5pt]
			{\bf p}^\ast (\pi) = \nabla_x \Psi ({\bf x}^\ast (\pi))
		\end{cases}
	\end{equation*}
	where $\Psi({\bf x}) = \frac{1}{2} \|{\bf x}\|^2$ is the terminal cost. Moreover, since the Hamiltonian does not depend on the state variable ${\bf x}$, we simply have $\dot{{\bf p}^\ast}(t) = 0$ for all $t \in [0,\pi)$. We therefore deduce that the adjoint trajectory is constant, and given by
	\begin{equation}\label{eq:adjoint constant}
		{\bf p}^\ast (t) = {\bf x}^\ast (\pi), \quad \forall t \in [0,\pi). 
	\end{equation}
	
	\item[2.] \textbf{The Optimal  Control}: now, using the optimal adjoint trajectory, we can deduce the necessary optimality condition for the control, which reads as follows:
	\begin{align}\label{eq:control design}
		u^* (t) \in \argmin_{\vert u\vert\leq 1} \mathcal{H}(t,{\bf p}^*(t),u), \quad \forall t \in [0,\pi).
	\end{align}
	As we will see in subsections \ref{sec: proof:bang-bang} and \ref{sec: proof:PLP}, for functions $\mathcal{L}$ as the ones we consider in Theorems \ref{th:bang-bang} and \ref{th:PLP}, this argmin is a singleton for almost every $t\in [0,\pi)$. Hence, given the adjoint ${\bf p}^\ast$, the condition \eqref{eq:control design} uniquely determines the optimal control almost everywhere in $[0,\pi)$. The only points where the control is not uniquely determined are, precisely, the switching angles, i.e. the points of discontinuity of the solution.
\end{itemize}

In view of the form of the adjoint trajectory \eqref{eq:adjoint constant} associated to the optimal state trajectory ${\bf x}^\ast$, let us introduce the function
\begin{align}\label{eq:m ast}
	\mu^\ast (t) := \frac 2\pi \big({\bf x}^*(\pi) \cdot {\bf \mathcal{D}}(t)\big) = \sum_{j \in \mathcal{E}_a} a^*_j (\pi) \cos(jt) + \sum_{j \in \mathcal{E}_b} b^*_j (\pi) \sin(jt). 
\end{align}
Then, in view of \eqref{eq:hamil} and \eqref{eq:adjoint constant}, we can write the optimality condition \eqref{eq:control design} as
\begin{align}\label{eq:control design2}
	u^\ast(t)  \in & \argmin_{\vert u\vert\leq 1}  \mathcal{J} (u,\mu^\ast(t)).  
\end{align}    
where $\mathcal{J}$ is defined as
\begin{align}\label{eq:functionalJ}
	\mathcal{J} (u,\mu^\ast(t)):= \varepsilon \mathcal{L}(u) - \mu^\ast(t) u .
\end{align}    

We are now ready to prove that the solutions to Problem \ref{pb:OCP_penalizado}, when $\mathcal{L}$ is chosen as in Theorems \ref{th:bang-bang} and \ref{th:PLP}, have the desired staircase form \eqref{eq:uExpl}--\eqref{eq:staircase prop}.

\subsection{Proof of Theorem \ref{th:bang-bang} - Part 1}\label{sec: proof:bang-bang}

In this subsection we prove that, when $\mathcal{L}$ is given as in Theorem \ref{th:bang-bang}, the solutions to Problem \ref{pb:OCP_penalizado} are bang-bang.

\begin{proof}[Proof of Theorem \ref{th:bang-bang} (bang-bang structure of the control)]%

We need to prove that, if $\mathcal{L}(u) = \alpha u$ for some $0 \neq \alpha\in \mathbb{R}$, then any optimal control $u^\ast$ has the form \eqref{eq:uExpl} with $\mathcal{U}=\{-1,1\}$. Or in other words,  $u^\ast(t)$ takes values in $\mathcal{U}$ for all $t\in [0,\pi)$, except for a finite number of times.

Let $u^\ast\in \mathcal{A}$ be a solution to Problem \ref{pb:OCP_penalizado}, and let ${\bf x}^\ast$ be its associated optimal trajectory. We just need to notice that, due to \eqref{eq:control design2} and the choice of $\mathcal{L}$, $u^\ast$ satisfies
\begin{align*}
	u^\ast (t) = \begin{cases}
		-1 & \text{if} \   \mu^\ast(t) < \varepsilon\alpha 
		\\
		1 & \text{if} \  \mu^\ast(t) > \varepsilon\alpha
	\end{cases}.
\end{align*}

Observe that, when $\mu^\ast(t) = 0$, which corresponds only to the cases when ${\bf x}^\ast(\pi) = 0$, the optimal control is constant and is just given by  $u^\ast (t) =  -\text{sgn} (\alpha)$. In all the other cases, when ${\bf x}^\ast(\pi)\neq 0$, the function $\mu^\ast(t)$ is a linear combination of sines and cosines, and therefore, $\mu^\ast (t) = \varepsilon\alpha$ can only hold for a finite number of times $t\in [0,\pi)$, which are the discontinuity points of $u^\ast$ (the switching angles). Note that the choice of $u^\ast$ at these points is irrelevant as it represents a set of zero measure.  See Fig. \ref{fig:Bang-Bang-penalization} for a graphical illustration of the proof.
\end{proof}

\begin{SCfigure}[1.4][h] 
	\centering
	\includegraphics[scale=0.35]{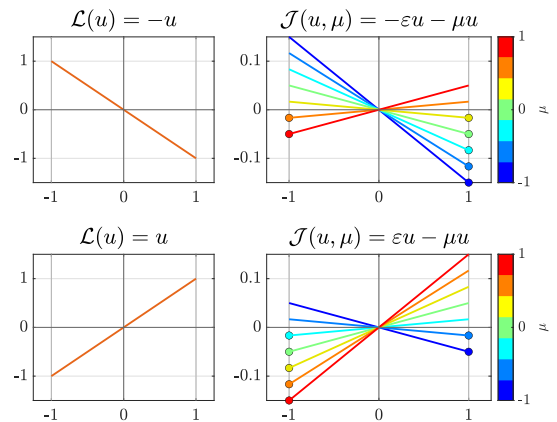}
	\caption{Bi-level SHE: in the left column, we see two examples of functions $\mathcal{L}$ as in Theorem \ref{th:bang-bang}. In the right column, we see the corresponding function $\mathcal{J}(\cdot,\mu)$ for different values of $\mu$. For each of them, the minumun is marked with a point.  We can see that the minimum is always attained at $-1$ or $1$.}\label{fig:Bang-Bang-penalization} 
\end{SCfigure}

\subsection{Proof of Theorem \ref{th:PLP} - Part 1}\label{sec: proof:PLP}
In this subsection we prove that, when $\mathcal{L}$ is given as in Theorem \ref{th:PLP}, the solutions to Problem \ref{pb:OCP_penalizado} have the multilevel structure. 

\begin{proof}[Proof of Theorem \ref{th:PLP} (multilevel control)]

In this case, we suppose that $\mathcal{U} = \{ u_k\}_{k=1}^L$ is a finite set of real numbers in $[-1,1]$ satisfying
\begin{align}\label{eq:order} 
	-1 = u_1 < u_2 <\ldots <u_L = 1, \quad \text{with} \ L> 2.
\end{align} 

The case $L=2$ is just the bi-level case. As in the previous proof, our goal is to show that the argmin in \eqref{eq:control design2} is a singleton and belongs to $\mathcal{U}$ for every $t\in [0,\pi)$ except for a finite number of points in $[0,\pi)$.

In this case, the study of the minimizers of $\mathcal{J}$ is slightly more involved since the penalization function $\mathcal{L}$ defined in \eqref{eq:PLP}-\eqref{eq:lambda k} is not differentiable at the points $u_k\in\mathcal U$. Since $\mathcal{L}$ is an affine interpolation of a convex function and, therefore, it is Lipschitz and convex, we deduce that also $\mathcal{J}$ is Lipschitz and convex as a function of $u$. In view of this, we have that $u^\ast$ minimizes $\mathcal{J} (u,\mu)$ if and only if
\begin{equation}\label{opti cond subdiff}
	0\in \partial_u \mathcal{J} (u^\ast,\mu),
\end{equation}
where $\partial_u$ denotes the subdifferential with respect to $u$. 

Let us recall below the definition of subdifferential from convex analysis:
\begin{align*}
	\partial_u \mathcal{J} (u,\mu) = \{c\in \mathbb{R} \quad \text{s.t.} \mathcal{J} (v,\mu) - \mathcal{J} (u,\mu) \geq c(v-u) \forall v\in [-1,1] \}. 
\end{align*}

For a convex function as $\mathcal{J}(\cdot, \mu)$, one can readily show that the subdifferential at $u\in (-1,1)$ is the nonempty interval $[a,b]$, where $a$ and $b$ are the one-sided derivatives
\begin{align*}
	a = \displaystyle\lim_{v\to u^-} \dfrac{\mathcal{J} (v,\mu) - \mathcal{J}(u,\mu)}{v-u}, \quad\quad\quad b = \displaystyle\lim_{v\to u^+} \dfrac{\mathcal{J} (v,\mu) - \mathcal{J}(u,\mu)}{v-u}. 
\end{align*}

Moreover, the subdifferential at $u=-1$ and $u=1$ is given by $(-\infty, b]$ and $[a,+\infty)$ respectively. Notice that, if $\mathcal J$ is differentiable at some $u\in (-1,1)$, then the left and the right derivatives coincide, and thus, $\partial_u \mathcal{J}(u,\mu)$ is just the classical derivative. Using this characterization of the subdifferential, we can compute $\partial_u\mathcal{J}(u,\mu)$ for all $u\in [-1,1]$ in terms of $\mu$. To this end, let us define
\begin{align*} 
p_k := \frac{d}{du}\lambda_k(u) = \frac{\mathcal{P}(u_{k+1}) - \mathcal{P} (u_k) }{u_{k+1} - u_k} 
\end{align*} 
for all $k\in \{1, \ldots, L-1\}$, with $\lambda_k(u)$ given by \eqref{eq:lambda k}. Using \eqref{eq:functionalJ} and \eqref{eq:PLP}, we can compute
\begin{align*}
	&\partial_u \mathcal{J} (-1,\mu) = (-\infty, \varepsilon p_1 -\mu], 
	\\[5pt]
	&\partial_u \mathcal{J} (1,\mu) = [\varepsilon p_{L-1} -\mu, +\infty), 
	\\[5pt]
	&\partial_u \mathcal{J} (u_k,\mu) = [\varepsilon p_{k-1} -\mu,  \, \varepsilon p_k -\mu],
\end{align*}
for all $k\in \{ 2, \ldots, L-1\}$, and
\begin{equation*}
	\partial_u \mathcal{J}(u,\mu) = \{\varepsilon p_k -\mu\},
\end{equation*}
for all $u\in (u_k, u_{k+1})$ and all $k\in \{ 1, \ldots, L-1 \}$. In view of the above computation, we obtain that
\begin{equation}\label{eq:subdiff}
	\begin{array}{ll}
		0\in \partial_u \mathcal{J} (-1,\mu) & \quad\text{iff}\quad  \mu\leq  \varepsilon p_1, 
		\\[5pt]
		0\in \partial_u \mathcal{J} (1,\mu) & \quad\text{iff} \quad \mu\geq  \varepsilon p_{L-1}, 
		\\[5pt]
		0\in \partial_u \mathcal{J} (u_k,\mu) & \quad\text{iff} \quad  \varepsilon p_{k-1} \leq \mu \leq \varepsilon p_k , 
	\end{array} 
\end{equation}
for all $k\in \{ 2, \ldots, L-1\}$,  and
\begin{equation}\label{eq:subdiff2}
	0\in \partial_u \mathcal{J} (u,\mu) \;\forall u\in [u_k, u_{k+1}] \qquad \text{iff} \ \mu= \varepsilon p_k
\end{equation}
for all $k\in \{ 1, \ldots, L-1 \}$.

Using \eqref{eq:subdiff}, along with the optimality condition \eqref{opti cond subdiff}, we deduce that, 
for a.e $\mu\in \mathbb{R}$, we have
\begin{equation}\label{eq:argmin singleton}
	\argmin_{\vert u\vert\leq 1} \mathcal{J} (u,\mu) = \{u_k\} \quad \text{for some} \ u_k\in \mathcal{U}.
\end{equation}
Indeed, \eqref{eq:argmin singleton} does not hold if and only if
\begin{equation}\label{eq:argmin non singleton}
	\mu = \varepsilon p_k \qquad \text{for some}\ k\in \{1,\ldots, L-1\}.
\end{equation}

Observe that, when $\mu^\ast(t) = 0$, which corresponds only to ${\bf x}^\ast(\pi) = 0$, the optimal control is constant and is just given by  $u^\ast (t) =  \argmin_{|u|\leq 1} \mathcal{L}(u)$ which, by hypothesis, is a singleton and belongs to $\mathcal{U}$ (note that between any two consecutive points of $\mathcal{U}$, the function $\mathcal{L}$ is a straight line). In all the other cases,  i.e. when ${\bf x}^\ast(\pi)\neq 0$, $\mu^\ast(t)$ is a linear combination of sines and cosines, and therefore, $\mu^\ast (t) = \varepsilon p_k$ can only hold, for each $k\in \{  1, \ldots, L-1 \}$, a finite number of times in $[0,\pi)$. These are precisely the discontinuity points of $u^\ast$ (the switching angles).

We have proved that, for all $t\in [0,\pi)$ except for a finite number of discontinuity points, which are precisely the switching angles $\{\phi_m\}_{m=0}^M$, we have $u^\ast(t)\equiv u_k$ for some $u_k\in\mathcal U$. Observe that, due to the continuity of $\mu^\ast(t)$, along with \eqref{eq:subdiff}, it is clear that $u^\ast(t)$ does not change value between two consecutive switching angles. Therefore, $u^\ast$ is piecewise constant, with a finite number of switches. The choice of $u^\ast$ at the discontinuity points is irrelevant as it represents a set of zero measure.

Finally, the staircase property \eqref{eq:staircase prop} can be deduced from \eqref{eq:control design2} and \eqref{eq:subdiff}, along with the continuity of the function $\mu^\ast(t)$. Following the same idea of Figure \ref{fig:Bang-Bang-penalization} for the Bang-Bang control, we can see  in Fig. \ref{fig:several_hamiltonian} a graphical interpretation of the proof for the multilevel case.
\end{proof} 

\begin{SCfigure}[0.9][h]
	\includegraphics[scale=0.35]{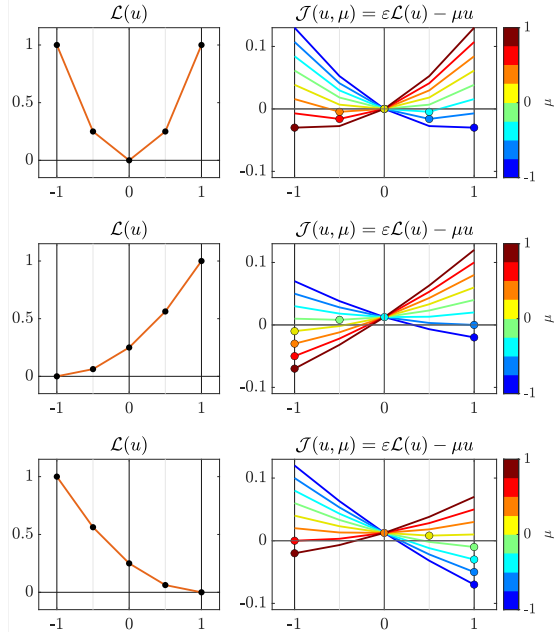}
	\caption{Multilevel SHM: in the left column, we see three different penalization functions $\mathcal{L}$ fulfilling the hypotheses of Theorem \ref{th:PLP}. In the right column, we see the corresponding function $\mathcal{J}(\cdot,\mu)$ for different values of $\mu$. For each of them, the minumun is marked with a point.  We can see that the minimum is always attained in $\mathcal{U}$.} 
	\label{fig:several_hamiltonian}
\end{SCfigure} 

\subsection{Uniqueness and continuity of solutions}\label{sec: uniqueness continuity}

The proofs in this subsection apply to both Theorems \ref{th:bang-bang} and \ref{th:PLP} (the bilevel and the multilevel case).

\medskip
\begin{proof}[Proof of Theorems \ref{th:bang-bang} and \ref{th:PLP} - uniqueness of solutions.]
We first prove that Problem \ref{pb:OCP_penalizado} admits a unique solution, i.e.  for each ${\bf x}_0\in \mathbb{R}^N$, there exists a unique $u^\ast\in \mathcal{A}$ minimizing the functional
\begin{equation}\label{eq:functional F}
	F(u,{\bf x}_0) := \dfrac{1}{2} \|{\bf x}(\pi) \|^2 + \varepsilon \int_0^\pi \mathcal{L}(u(t)) dt,
\end{equation}
where,  for each $u\in \mathcal{A}$, ${\bf x}(\pi)$ is given by
\begin{align*}
	{\bf x} (\pi) = {\bf x}_0  - \dfrac{2}{\pi} \int_0^\pi {\bf \mathcal{D}}(t) u(t) dt.
\end{align*}

We argue by contradiction. Suppose that there exist $u_1,u_2\in\mathcal{A}$ solutions to Problem \ref{pb:OCP_penalizado}, with $u_1\neq u_2$ in a set of positive measure. As both of them are optimal, using the arguments in subsections \ref{sec:opti cond}, \ref{sec: proof:bang-bang} and \ref{sec: proof:PLP}, we deduce that the controls $u_1$ and $u_2$ are uniquely determined a. e. in $[0,\pi)$ by the final state of the associated trajectory, i.e. ${\bf x}_1^\ast(\pi)$ and ${\bf x}_2^\ast(\pi)$, respectively. Therefore, if $u_1 \neq u_2$ in a set of positive measure, then we have ${\bf x}_1^\ast (\pi)\neq {\bf x}_2^\ast(\pi)$. Let us now consider the control
\begin{align*}
	\tilde{u} (t) = \dfrac{u_1(t) + u_2(t)}{2}.
\end{align*}
By the linearity of the dynamics \eqref{eq:CauchyReversed}, the convexity of $\mathcal{L}$, and using that ${\bf x}_1^\ast(\pi) \neq {\bf x}_2^\ast(\pi)$, we obtain
\begin{align*} 
	F(\tilde{u}, {\bf x}_0) &= \dfrac{1}{2} \left\| \dfrac{{\bf x}_1^\ast(\pi) + {\bf x}_2^\ast(\pi) }{2}\right\|^2 \!+ \varepsilon \int_0^\pi \mathcal{L} \left( \dfrac{u_1(t) + u_2(t) }{2}\right) dt < \dfrac{F(u_1,{\bf x}_0) + F(u_2, {\bf x}_0)}{2}. 
\end{align*}

Hence, using that both $u_1$ and $u_2$ minimize the functional $F(\cdot,{\bf x}_0)$, we obtain $F(\tilde{u},{\bf x}_0) < F(u_1,{\bf x}_0)$, which contradicts the optimality of $u_1$. We therefore conclude that the $u_1 (t) =u_2(t)$ for a.e. $t\in [0,\pi)$.
\end{proof}

\begin{proof}[Proof of Theorems \ref{th:bang-bang} and \ref{th:PLP} - continuity w.r.t. initial condition.]
Let us now give the proof of the $L^1$-continuity of the unique solution $u^\ast$ to Problem \ref{pb:OCP_penalizado} with respect to the initial condition. 
	
Let ${\bf x}_0$ be fixed. We need to prove that, for all $\gamma >0$, there exists $\delta >0$ such that 
\begin{align*}
	\|{\bf x}_1-{\bf x}_0\| \leq \delta \quad \text{implies} \quad \| u^\ast_1 - u^\ast_0\|_{L^1(0,\pi)} <\gamma,
\end{align*}
where $u^\ast_0$ and $u^\ast_1$ are the optimal controls corresponding to the initial conditions ${\bf x}_0$ and ${\bf x}_1$ respectively. 
	
As we have proved in subsections \ref{sec: proof:bang-bang} and \ref{sec: proof:PLP}, for any ${\bf x}_1$, the optimal control $u_1^\ast$, solution to Problem \ref{pb:OCP_penalizado}, is piecewise constant, taking values in $\mathcal{U}$, with a finite number of discontinuity points (switching points). Moreover, we claim that the number of switching points is bounded from above by a constant $M^\ast \in \mathbb{N}$, independent of  ${\bf x}_1\in \mathbb{R}^N$. Indeed, as we proved in subsection \ref{sec:opti cond}, the optimal control $u_1^\ast$ is determined by the optimality condition \eqref{eq:control design2}, using the function $\mu^\ast$ defined in \eqref{eq:m ast}. If $\mu^\ast \equiv 0$, then $u^\ast_1$ is constant and there are no switching points. In the other cases, $\mu^\ast(t)$ is a linear combination of sines and cosines with fixed frequencies.  In the bilevel case, in subsection \ref{sec: proof:bang-bang} we proved that the switching points correspond to the intersection points of $\mu^\ast(t)$ with $\varepsilon\alpha$. In the multilevel case, we proved in subsection \ref{sec: proof:PLP} that the switching points correspond to the intersections of $\mu^\ast(t)$ with $\varepsilon p_k$, see \eqref{eq:argmin non singleton}. In view of \eqref{eq:m ast}, as the frequencies are fixed, the number of these intersection points in the interval $[0,\pi)$ cannot exceed a certain number $M^\ast$, independent of the coefficients $a_j^\ast$ and $b_k^\ast$ in \eqref{eq:m ast}. Actually, $M^\ast$ only depends on  $\max\{\mathcal{E}_a, \mathcal{E}_b\}$ and the cardinality of $\mathcal U$. The claim then follows.
	
Using that, for any ${\bf x}_1$, the solution $u_1^\ast$ is piecewise constant taking values only in $\mathcal{U}$, and with a finite number of switches less than some $M^\ast$ independent of ${\bf x}_1$,  we deduce that there exists $K>0$, independent of ${\bf x}_1$ such that $\| u_1^\ast\|_{BV} \leq K$. See \eqref{BV norm} below for the definition of the $BV$ norm. We then obtain that, for any ${\bf x_1}\in \mathbb{R}^N$,
\begin{align*}
	u_1^\ast\in \mathcal{A}_K^\ast:= \{ u\in \mathcal{A}\, : \ \| u\|_{BV} \leq K \}.
\end{align*}

Now, for any $\gamma>0$ fixed, we can apply Lemma \ref{lem: compactness} below, to ensure the existence of $\eta>0$ such that
\begin{equation}\label{eq:low est boundary}
	F(u,{\bf x}_0)  \geq F(u_0^\ast,{\bf x}_0)  + \eta,
\end{equation}
for all $u\in \mathcal{A}_K^\ast$, with $\| u-u_0^\ast\|_{L^1(0,\pi)} =\gamma$. Since the set $\mathcal{A}_K^\ast$ is convex and $u_0^\ast$ minimizes $F(\cdot,{\bf x}_0)$, we can use \eqref{eq:low est boundary} and the convexity of the function $u\mapsto F(u,{\bf x}_0)$, to deduce that
\begin{equation}\label{eq:low est outside}
	F(u,{\bf x}_0)  \geq F(u_0^\ast,{\bf x}_0)  + \eta,
\end{equation}
for all $u\in \mathcal{A}_K^\ast$ such that  $\| u-u_0^\ast\|_{L^1(0,\pi)} \geq \gamma$.
	
Observe that, for any $u\in \mathcal{A}$, the function ${\bf x} \mapsto F(u,{\bf x})$  is  locally Lipschitz, and therefore, there exists a constant $C_F>0$ satisfying
\begin{equation}\label{eq:F Lipschitz}
	\vert F(u,{\bf x}_1)-F(u,{\bf x}_0)\vert\leq  C_F \|{\bf x}_1 - {\bf x}_0\|
\end{equation}
for any ${\bf x}_1$ such that $\|{\bf x}_1-{\bf x}_0\| \leq 1$. Notice that, since $u\in\mathcal{A}$ only takes values in $[-1,1]$, $C_F$ can be chosen independently of $u$.
	
Now,  combining  \eqref{eq:low est outside} and \eqref{eq:F Lipschitz}, for all $u\in \mathcal{A}_K^\ast$ such that $\| u-u_0^\ast\|_{L^1(0,\pi)} \geq \gamma$, we obtain
\begin{align}\label{estimate eta}
	F(u_0^\ast,{\bf x}_1) & \leq F(u_0^\ast,{\bf x}_0) +  C_F \|{\bf x}_1-{\bf x}_0\| \notag 
	\\
	& \leq F(u,{\bf x}_0) - \eta +  C_F \|{\bf x}_1- {\bf x}_0\| 
	\\
	& \leq F(u,{\bf x}_1) - \eta  + 2 C_F \|{\bf x}_1- {\bf x}_0\| \notag  
\end{align}

Finally, we can choose $\delta \in (0,1)$ such that $\delta <\frac{\eta}{4C_F}$,  and from \eqref{estimate eta}, we deduce that, if $\|{\bf x}_1 - {\bf x}_0\| \leq \delta$, then
\begin{align*} 
	F(u_0^\ast,{\bf x}_1)  \leq F(u,{\bf x}_1)  - \dfrac{\eta}{2}
\end{align*} 
for all $u\in \mathcal{A}_K^\ast$ such that  $\| u-u_0^\ast\|_{L^1(0,\pi)} \geq \gamma$, which then implies that necessarily $\| u_1^\ast-u_0^\ast\|_{L^1(0,\pi)} \leq \gamma$. This concludes the proof of the $L^1$-continuity of the solution with respect to the initial condition.
\end{proof}

Let us conclude the section with the following Lemma, which has been used in the previous proof.

\begin{lemma}\label{lem: compactness}
Let ${\bf x}_0\in \mathbb{R}^N$ be given and let $\mathcal{L}$ be as in Theorem \ref{th:bang-bang} or \ref{th:PLP}. Let $u^\ast_0\in\mathcal{A}$ be the unique solution to Problem \ref{pb:OCP_penalizado}. For any $K>0$, define the set of controls
\begin{equation}\label{A star K}
	\mathcal{A}_K^\ast := \{ u\in \mathcal{A}\, : \ \| u\|_{BV} \leq K \}.
\end{equation}
Then, for any $\gamma >0$, there exists $\eta:=\eta(\gamma ,K)>0$ such that $F(u^\ast_0,{\bf x}_0 ) \leq F(u,{\bf x}_0 ) - \eta$, for all $u\in \mathcal{A}_K^\ast$ such that $\|u - u_0^\ast\|_{L^1(0,\pi)} = \gamma$.
\end{lemma}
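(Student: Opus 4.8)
The plan is to argue by contradiction using the compactness of $\mathcal{A}_K^\ast$ in $L^1(0,\pi)$ together with the strict convexity of $F(\cdot,{\bf x}_0)$ along the relevant direction. Suppose the conclusion fails: then there is a sequence $(u_n)\subset \mathcal{A}_K^\ast$ with $\|u_n - u_0^\ast\|_{L^1(0,\pi)} = \gamma$ for all $n$, and $F(u_n,{\bf x}_0) \to F(u_0^\ast,{\bf x}_0)$ (the limit cannot be strictly smaller, since $u_0^\ast$ is the global minimizer). Since $\|u_n\|_{BV}\leq K$ and $\|u_n\|_{L^\infty}\leq 1$, Helly's selection theorem (compact embedding $BV(0,\pi)\hookrightarrow L^1(0,\pi)$) provides a subsequence, not relabeled, converging in $L^1(0,\pi)$ to some $\bar u$ with $\|\bar u\|_{BV}\leq K$ by lower semicontinuity of the $BV$ seminorm. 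Passing to a further subsequence we may assume $u_n\to\bar u$ a.e., hence $\|\bar u - u_0^\ast\|_{L^1(0,\pi)} = \gamma$ by dominated convergence; in particular $\bar u\neq u_0^\ast$ on a set of positive measure.

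Next I would show $F(\bar u,{\bf x}_0) = F(u_0^\ast,{\bf x}_0)$. The map $u\mapsto {\bf x}(\pi) = {\bf x}_0 - \frac{2}{\pi}\int_0^\pi {\bf \mathcal{D}}(t)u(t)\,dt$ is continuous from $L^1(0,\pi)$ to $\mathbb{R}^N$ (since ${\bf \mathcal{D}}$ is bounded), so $\frac12\|{\bf x}_n(\pi)\|^2\to\frac12\|\bar{\bf x}(\pi)\|^2$; and $\int_0^\pi\mathcal{L}(u_n)\,dt\to\int_0^\pi\mathcal{L}(\bar u)\,dt$ because $\mathcal{L}$ is continuous and bounded on $[-1,1]$ and $u_n\to\bar u$ a.e. Combining with $F(u_n,{\bf x}_0)\to F(u_0^\ast,{\bf x}_0)$ yields $F(\bar u,{\bf x}_0) = F(u_0^\ast,{\bf x}_0)$, so $\bar u$ is also a minimizer. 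But uniqueness of the minimizer to Problem \ref{pb:OCP_penalizado}, already established in the preceding proof, forces $\bar u = u_0^\ast$ a.e., contradicting $\|\bar u - u_0^\ast\|_{L^1(0,\pi)} = \gamma > 0$. Hence such $\eta>0$ exists.

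The main obstacle — and the step deserving the most care — is verifying that the infimum over the "sphere" $\{u\in\mathcal{A}_K^\ast : \|u - u_0^\ast\|_{L^1} = \gamma\}$ is genuinely \emph{attained}, i.e.\ that the constraint set is closed under the $L^1$-limit. This is exactly what the $BV$ bound buys us: without it, a minimizing sequence on this sphere could concentrate oscillations and fail to be precompact in $L^1$, and the $L^1$-distance-$\gamma$ constraint is not weak-$\ast$ closed in $L^\infty$. With the uniform $BV$ bound in hand, Helly's theorem restores compactness and the $L^1$-distance passes to the limit by dominated convergence, so the argument closes cleanly. One should also note in passing that $\eta$ genuinely depends on both $\gamma$ and $K$, since enlarging $K$ enlarges the admissible set over which the infimum is taken and may decrease it toward $F(u_0^\ast,{\bf x}_0)$; this dependence is harmless for the application in the continuity proof, where $K$ is fixed first.
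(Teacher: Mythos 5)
Your proof is correct and follows essentially the same route as the paper: a (contradiction-framed) minimizing sequence on the $L^1$-sphere around $u_0^\ast$, compactness of the $BV$-bounded set in $L^1$ (Helly/compact embedding), continuity of $F(\cdot,{\bf x}_0)$ and of the $L^1$-distance under strong $L^1$ convergence, and finally uniqueness of the minimizer to rule out the limit coinciding with $u_0^\ast$. The extra details you supply (lower semicontinuity of the $BV$ seminorm, a.e.\ convergence) are harmless refinements of the same argument.
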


In the definition of $\mathcal{A}_K^\ast$,  we are considering measurable functions of bounded variation in $(0,\pi)$, i.e. functions whose distributional derivative is a Radon measure in $(0,\pi)$, that we denote by $|Du|$, and such that $|Du|(0,\pi)$ is finite.
We recall that the norm $\| \cdot\|_{BV}$ is defined as
\begin{equation}\label{BV norm}
	\| u\|_{BV} := \int_0^\pi \vert u(t)\vert dt + \vert Du\vert (0,\pi ).
\end{equation}

See \cite[Chapter 3]{ambrosio2000functions} for further details on the space of functions of bounded variation.

\begin{proof}[Proof of Lemma \ref{lem: compactness}]
We need to prove that $\mathcal{I}_{\gamma,K} = F(u_0^\ast,{\bf x}_0) + \eta$, for some $\eta>0$, where
\begin{align*}
	\mathcal{I}_{\gamma,K} :=  \inf \left\{ F(u,{\bf x}_0) \,: \, u\in \mathcal{A}_K^\ast,\, \|u- u_0^\ast \|_{L^1(0,\pi)} = \gamma \right\}.
\end{align*}

The result follows from the fact that the space $BV(0,\pi)$ is compactly embedded in $L^1 (0,\pi)$, see \cite[Theorem 3.23]{ambrosio2000functions}. Consider any minimizing sequence $u_n\in \mathcal{A}_K^\ast$ with $\| u_n - u^\ast_0\|_{L^1(0,\pi)} =\gamma$, satisfying
\begin{align*}
	\lim_{n\to+\infty} F(u_n,{\bf x}_0) = \mathcal{I}_{K,\gamma}.
\end{align*}

By \cite[Theorem 3.23]{ambrosio2000functions}, there exists a subsequence of $u_n$ which converges to some $\tilde{u}\in \mathcal{A}$, strongly in $L^1(0,\pi)$. From the continuity of the $L^1$-norm and of the functional $F(\cdot,{\bf x}_0)$ with respect to the strong $L^1$-topology, we deduce that the limit $\tilde{u}$ satisfies $\| \tilde{u} - u_0^\ast \|_{L^1(0,\pi)} = \gamma$ and $\quad F(\tilde{u},{\bf x}_0) = \mathcal{I}_{K,\gamma}$. Finally, since $u_0^\ast$ is the unique minimizer of $F(\cdot,{\bf x}_0)$, we conclude that
\begin{align*}
	\mathcal{I}_{\gamma, K} - F(u_0^\ast,{\bf x}_0) = F(\tilde{u},{\bf x}_0) - F(u_0^\ast,{\bf x}_0) = \eta >0. 
\end{align*}
\end{proof}

\section{Numerical simulations}\label{sec:Simulations}

In this section, we present several examples in which we implement the optimal control strategy we proposed to solve the SHM problem. All the simulations we are going to present can be found also in \cite{simus}. Our Experiments were conducted on a personal MacBook Pro laptop (1,4 GHz Quad-Core Intel Core i5, 8GB RAM, Intel Iris Plus Graphics 1536 MB). 

To solve our optimal control Problem \ref{pb:OCP_penalizado}, we will employ the direct method \cite{rao2009survey} which, in broad terms, consists in discretizing the cost functional and the dynamics, and then apply some optimization algorithm. The dynamics will be approximated with the Euler method, while for solving the discrete minimization problem we will employ the nonlinear constrained optimization tool \texttt{CasADi} \cite{Andersson2019}. \texttt{CasADi} is an open-source tool for nonlinear optimization and algorithmic differentiation which implements the interior point method via the optimization software \texttt{IPOPT} \cite{wachter2006implementation}. To be efficiently applied to solve an optimal control problem, we then need the functional we aim to minimize to be smooth. While this is clearly true in the bi-level case of Problem \ref{pb:OCP1}, the functional in Problem \ref{pb:OCP_penalizado}, due to the piecewise affine penalization, is not differentiable at the points $u_k\in\mathcal U$. For this reason, when treating the multilevel case, we will first need to build a smooth approximation of the  function $\mathcal L$ we introduced in \eqref{eq:PLP}. Once we have this approximation, we will employ the optimal control approach we presented in Section \ref{sec:Contributions} to solve some specific examples of SHM problem.

\subsection{Smooth approximation of $\mathcal L$ for multilevel control}

As we mentioned, to efficiently employ \texttt{CasADi} for solving our optimal control problem in the multilevel case, we need to build a smooth approximation of the cost functional. For this reason, we will regularize the function $\mathcal L$ defined in \eqref{eq:PLP} as follows. First of all, for all real parameter $\theta>0$, we define the $C^\infty(\mathbb{R})$ function
\begin{align*}
	\displaystyle h^\theta(x) := \frac{1 + \tanh(\theta x)}{2}.
\end{align*}
and observe that, for almost every $x\in \mathbb{R}$, $h^\theta(x)\to h(x)$ as $\theta\to +\infty$, where $h$ is the Heaviside function 
\begin{align*}
	h(x) = \begin{cases}
		1 & \text{ if } x > 0 
		\\
		0 & \text{ if } x \leq 0.
	\end{cases}
\end{align*}
Secondly, for all $k \in \{1,\dots,N_u-1\}$ we define the (smooth) function $\chi_{[u_k,u_{k+1})}^\theta:\mathbb{R} \rightarrow \mathbb{R}$ given by
\begin{align*}
	\chi_{[u_k,u_{k+1})}^\theta(x) := - 1 + h^\theta(x-u_k) + h^\theta(-x+u_{k+1}) = \frac{\tanh[\theta(x-u_k)] + \tanh[\theta (u_{k+1}-x)]}{2}
\end{align*}
which, as $\theta\to +\infty$, converges in $L^\infty(\mathbb{R})$ to the characteristic function $\chi_{[u_k,u_{k+1})}$. Finally, we define
\begin{align}\label{eq:Lsmooth}
	\mathcal{L}^\theta(u) = \sum_{k = 1}^{N_u-1} \lambda_k \chi^\theta_{[u_k,u_{k+1})}(u),
\end{align}
with $\lambda_k$ given by \eqref{eq:lambda k}, which, as $\theta\to +\infty$, converges in $L^\infty(\mathbb{R})$ to the penalization function $\mathcal L$ defined in \eqref{eq:PLP}.

Notice that this regularization is independent of the function $\lambda_k$ in \eqref{eq:PLP}, which is just required to be in the form \eqref{eq:lambda k}. Nevertheless, in our numerical experiments we shall select some specific $\lambda_k$. In particular, we will use 
\begin{gather}
	\lambda_k = (u_{k+1}+u_{k}) (u-u_k) + u_k^2, 
\end{gather}
which corresponds to taking $\alpha=1$ and $\beta=0$ in \eqref{eq:parabola}.

\subsection{Direct method for OCP-SHE}

To solve Problem \ref{pb:OCP_penalizado}, we use a direct method, whose starting point is to discretize the cost functional and the dynamics. To this end, let us consider a $N_t$-points partition of the interval $[0,\pi]$ 
\begin{displaymath} 
	\mathcal{T} = \{t_k\}_{k=1}^{N_t} 
\end{displaymath}
and denote by ${\bf u} \in \mathbb{R}^{N_t}$ the vector with components $u_k = u(t_k)$, $k=1,\ldots,N_t$. Then the optimal control problem (\ref{pb:OCP1}) can be written as optimization one with variable ${\bf u} \in \mathbb{R}^{N_t}$. In more detail, we can formulate the problem \ref{pb:OCP_penalizado} as the following one in discrete time.

\begin{problem}[Numerical OCP]\label{pb:numOCP2}
Given two sets of odd numbers $\mathcal{E}_a$ and $\mathcal{E}_b$ with cardinalities $\vert\mathcal{E}_a\vert = N_a$ and $\vert\mathcal{E}_b\vert = N_b$, respectively, the targets $\aT \in \mathbb{R}^{N_a}$ and $\bT \in \mathbb{R}^{N_b}$, and the partition $\mathcal{T}$ of $[0,\pi]$, we look for ${\bf u} \in \mathbb{R}^{N_t}$ that solves the following minimization problem:
\begin{align*}
	&\min_{{\bf u} \in \mathbb{R}^{N_t}} \Bigg[\|{\bf x}_{N_t}\|^2 + \varepsilon \sum_{k=1}^{N_t-1} 
	\bigg[\frac{\mathcal{L}^\theta(u_{t_k}) + \mathcal{L}^\theta(u_{t_{k+1}})}{2}\Delta t_k \bigg]  \Bigg]  
	\\[15pt]
	&\notag \text{subject to: } \begin{cases}
	{\bf x}_{t_{k+1}} = {\bf x}_{t_k} - \Delta t_{k} (2/\pi) {\bf \mathcal{D}}(t_k)\\
	{\bf x}_{t_1} = {\bf x}_0:= [\aT,\bT]^\top
	\end{cases} 
\end{align*}
where 
\begin{gather}
	\Delta t_{k} = t_{k+1} - t_{k}, \hspace{1em} \forall k \in \{1,\dots,N_t-1\}.
\end{gather}
\end{problem}

\subsection{Numerical experiments}

We now present several numerical experiments to show the effectiveness of our optimal control approach to solve SHM problems. All the examples share the following common parameters 
\begin{align*} 
	\varepsilon = 10^{-5},  \quad \theta = 10^5, \quad \text{and} \quad\mathcal{P}_t = \{0,0.1,0.2,\dots,\pi\}.
\end{align*} 
We consider the frequencies 
\begin{equation}\label{example ferquencies}
	\mathcal{E}_a = \mathcal{E}_b = \{1,5,7,11,13\},
\end{equation}
and the target vectors 
\begin{equation}\label{example targets}
	\aT = \bT = (m,0,0,0,0,0)^\top, \quad \forall m \in [-0.8,0.8].
\end{equation}

We shall consider three different control sets $\mathcal{U}$ which correspond to the aforementioned types of control:
\begin{itemize}
	\item[1.] Bang-bang control: $\mathcal{U} = \{-1,1\}$.
	\vspace{0.05cm}
	\item[2.] Bang-off-bang control:  $\mathcal{U} = \{-1,0,1\}$. 
	\vspace{0.05cm}
	\item[3.] 5-multilevel control: $\mathcal{U} = \{-1,-1/2,0,1/2,1\}$.
\end{itemize}

The results of our simulations are displayed in Fig. \ref{fig:sim-bang-bang}.
We have plotted the function
\begin{align*} 
	\begin{array}{cccc}
		\Phi: & [-0.8, 0.8]\times [0,\pi] & \longrightarrow & \mathcal{U} 
		\\
		& (m,t)  &\longmapsto & u_m^\ast (t),
	\end{array}
\end{align*}
where, for each $m \in [-0.8,0.8]$, $u_m^\ast (\cdot)$ represents the solution to the SHM problem with target frequencies as defined in \eqref{example ferquencies}-\eqref{example targets}. 

\begin{figure}[h]
	\begin{minipage}{0.32\textwidth}
		\centering
		\includegraphics[scale=0.35]{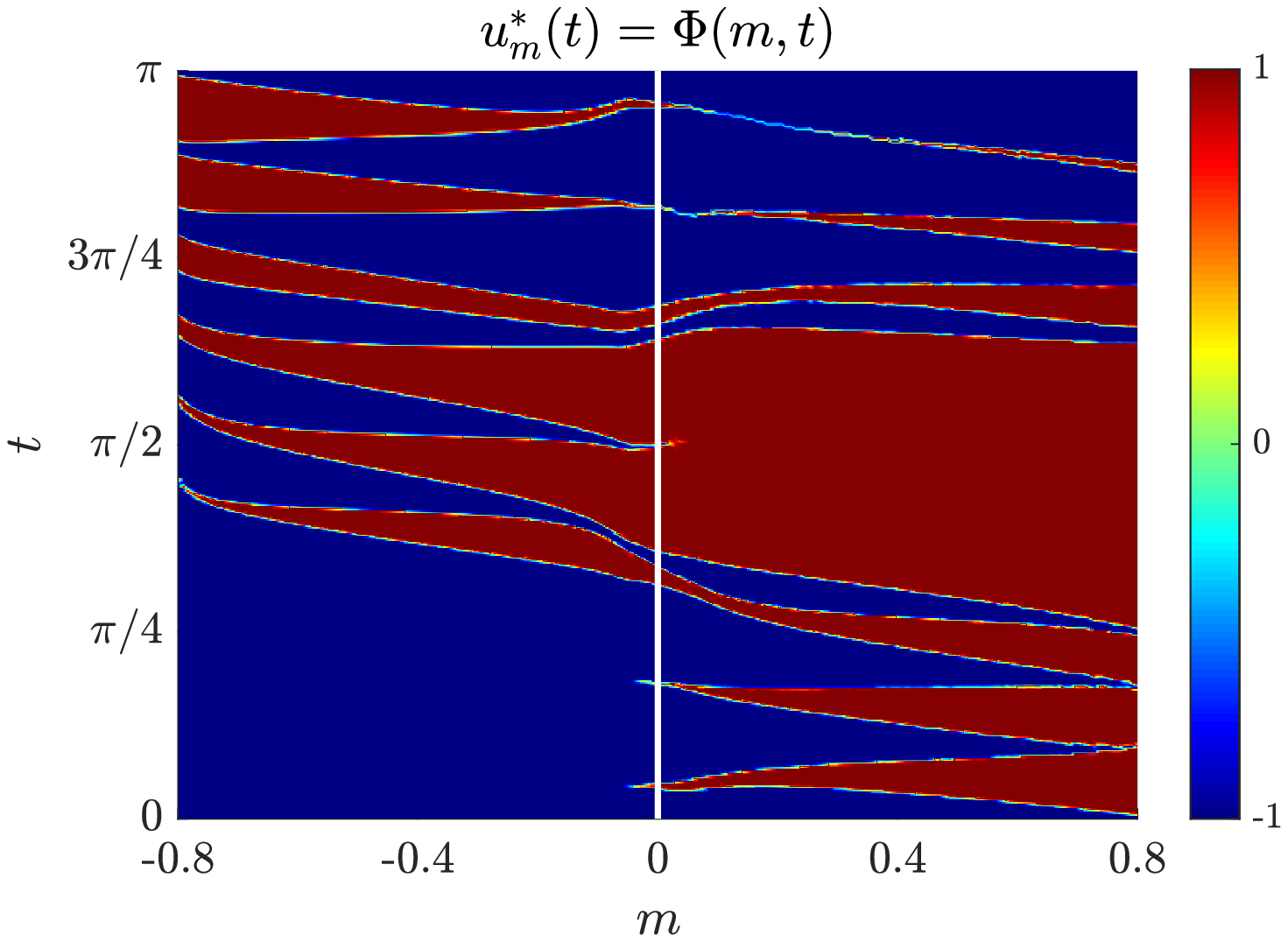}
	\end{minipage}
	\begin{minipage}{0.32\textwidth}
		\centering
		\includegraphics[scale=0.35]{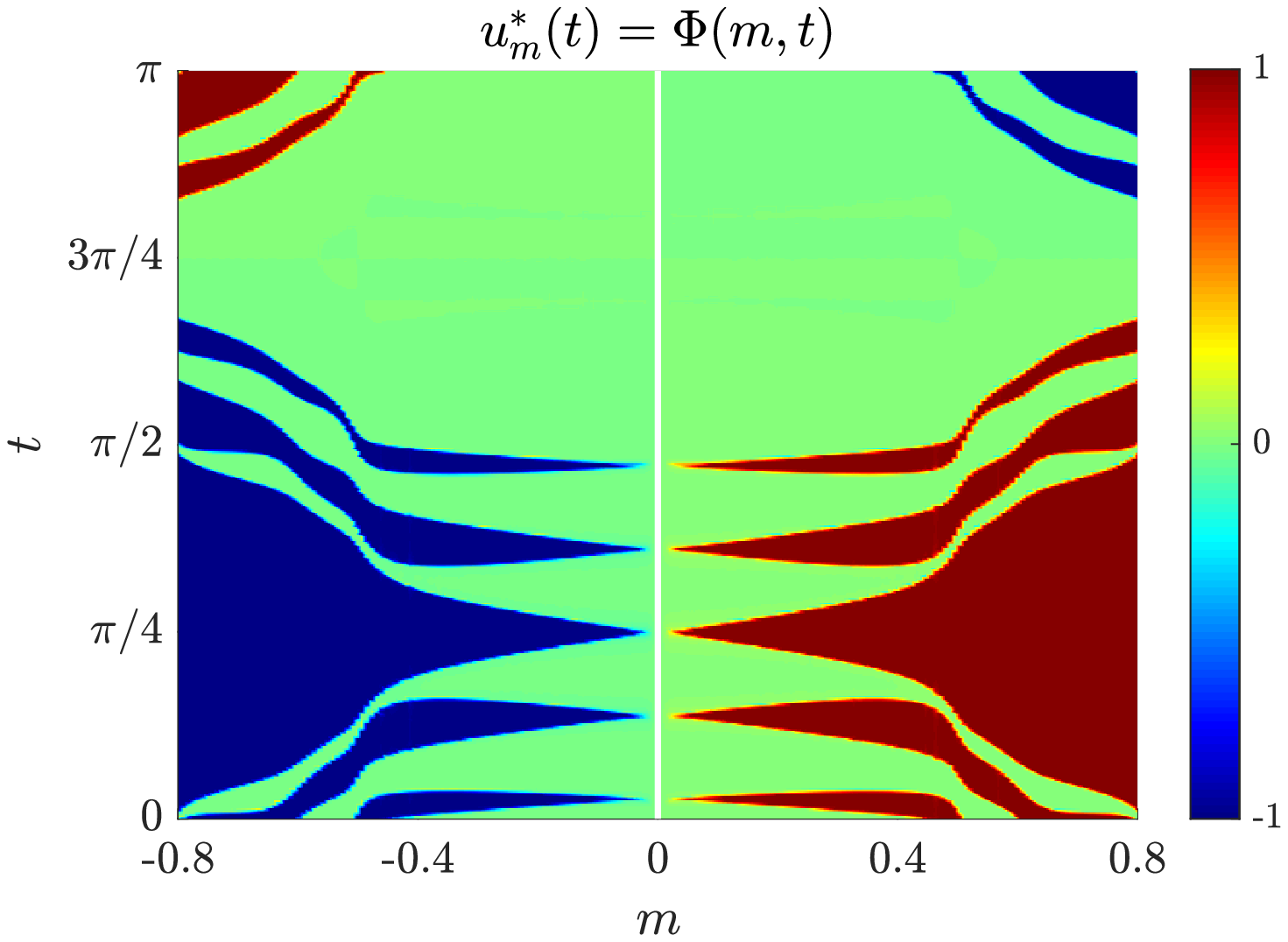}
	\end{minipage}
	\begin{minipage}{0.32\textwidth}
		\centering
		\includegraphics[scale=0.35]{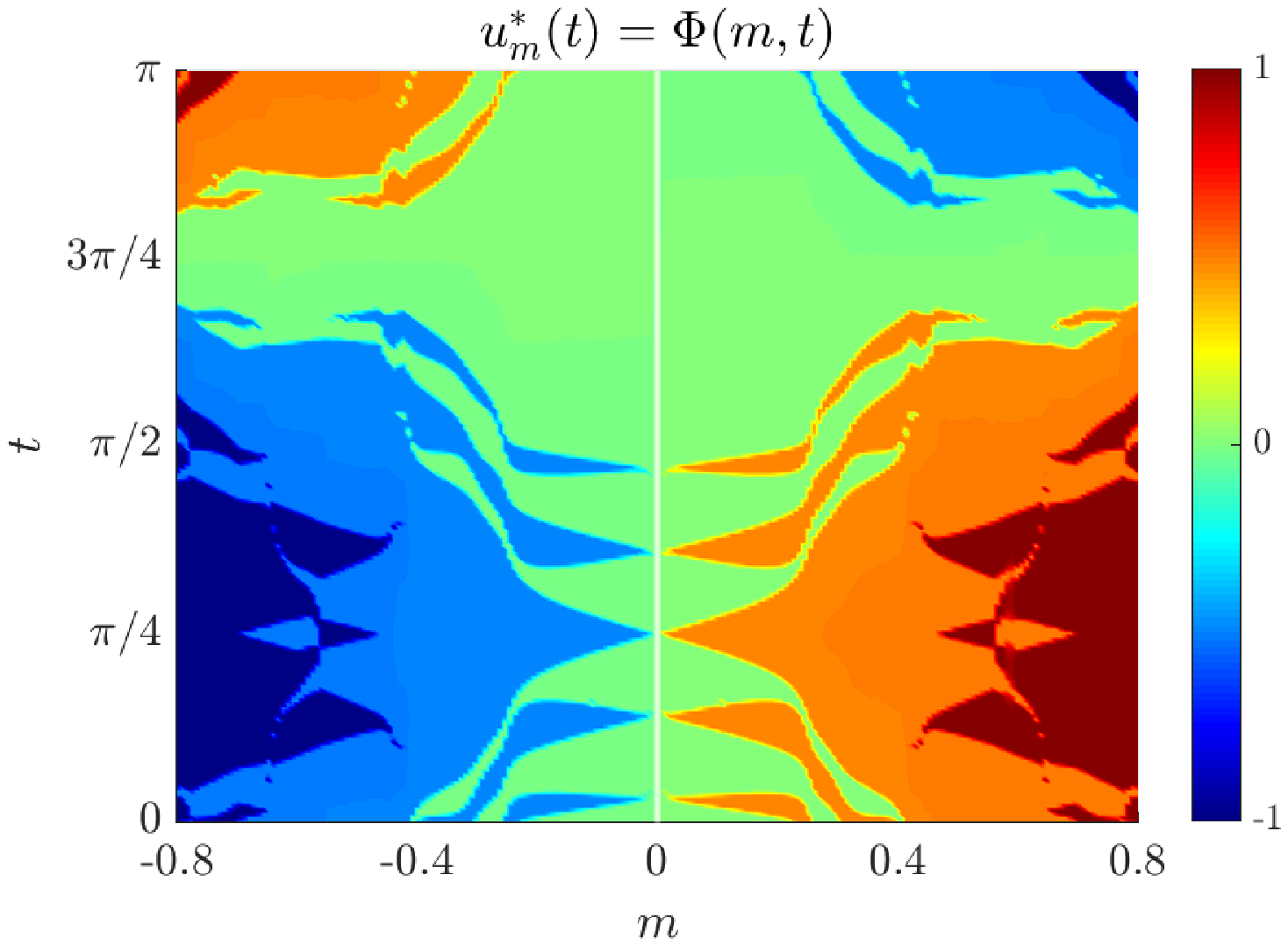}
	\end{minipage}
	\caption{Different types of control for the SHM problem: bang-bang (left), bang-off-bang (middle) and multilevel (right).}\label{fig:sim-bang-bang}
\end{figure} 

In Fig. \ref{fig:sim-bang-bang}, for each value of the parameter $m$ in the horizontal axis,  we observe that the optimal control, solution to Problem \ref{pb:OCP_penalizado} has the staircase structure introduced in Definition \ref{def:staircase prop}. The controls take values only in $\mathcal U$, which are represented by the different colors displayed at the right. For instance, in Fig. \ref{fig:sim-bang-bang}, the control is $u=-1$ in the blue region and $u=1$ in the red one. Note that,  the numerical results are in accordance with Theorems \ref{th:bang-bang} and \ref{th:PLP}. In addition to that, if we compare the policies $\Phi (m,\cdot) = u_m^\ast$ displayed in Fig. \ref{fig:sim-bang-bang} with the policies $\Pi_{\mS}$ of Fig. \ref{fig:chaos_policy}, we can see that the issues we mentioned in Section \ref{sec:SHE_finite-dim_pbm} concerning the solvable set and the continuity of the policy can be overcome by using our approach. In particular,  the optimal control formulation of SHM allows one to find solutions for an ample range of the parameter $m$, while considering always the same optimization Problem \ref{pb:numOCP2}. This is due to the fat that we are not restricting the solution to have a specific waveform. Furthermore, the \textit{combinatory problem} arising in the approach presented in Section \ref{sec:SHE_finite-dim_pbm} does not arise in our approach, as we do not need to launch an optimization process for all the possible waveforms for a given set $\mathcal U$. 

\vspace{0.5em}
\begin{remark}\label{counterexample}
Let us give an example which illustrates the necessity of assuming that the function $\mathcal{L}$ in Theorem \ref{th:PLP} has a unique minimizer in $[-1,1]$.
	
We consider the same parameters as in the above examples, but this time, the control set is given by
\begin{align*}
	\mathcal{U} = \{-1,-3/5,-1/5,1/5,3/5,1\}.
\end{align*}
	
This choice corresponds to the penalization function $\mathcal{L}_4$ represented in Fig. \ref{fig:examples_penalizations}. Observe that in this case
\begin{align*}
	\argmin_{\vert u\vert\leq 1} \mathcal{L}(u) = [-1/5, 1/5].
\end{align*}
	
In this case, the hypotheses of Theorem \ref{th:PLP} are not fulfilled and we cannot ensure that the solution has a staircase form. In Fig. \ref{fig:sim-multi-level-par},  we see that the solution is actually smooth for $m$ close to zero and takes values out of the control set $\mathcal{U}$. This stipulates that the assumption of $\mathcal{L}$ having a unique minimizer is necessary and cannot be removed if one wants to have a staircase solution. Notwithstanding, this issue can be overcome by choosing different values for the parameters $\alpha$ and $\beta$ in the definition of $\mathcal{L}$ in \eqref{eq:parabola}-\eqref{eq:lambda k}.
	
\begin{SCfigure}[1][h]
	\includegraphics[scale=0.4]{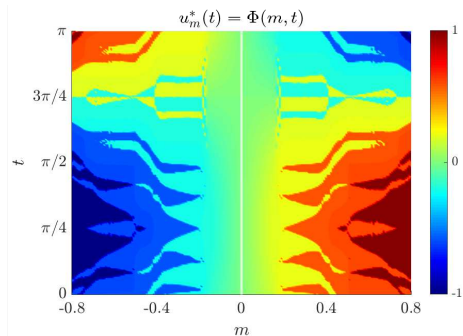}
	\caption{Solution to the optimal control problem \ref{pb:OCP_penalizado} with $\mathcal{L}$ not satisfying the uniqueness of the minimizer.}\label{fig:sim-multi-level-par}
\end{SCfigure} 
	
\end{remark}

\section{Conclusions}\label{sec:conclusions}

In this paper, we propose a novel optimal control based approach to the Selective Harmonic Modulation problem. More precisely, we have described how the SHM Problem \ref{pb:SHEp} can be reformulated in terms of a null-controllability one for which the solution $u$ plays the role of the control and can be obtained minimizing of a suitable cost functional. Besides, we have shown both theoretically and through numerical simulations that with our methodology we are able to solve several critical issues (described in detail in Section \ref{sec:SHE_finite-dim_pbm}) arising in practical power electronic engineering applications.
\begin{itemize}
	\item[1.] \textit{Combinatory problem}: in our approach, neither the waveform nor the number of switching angles need to be a priori determined, as they are implicitly established by the optimal control. This has two relevant advantages with respect to existing techniques as the one presented in Section \ref{sec:SHE_finite-dim_pbm}. On the one hand, this renders a computationally lighter methodology to solve the SHM problem, as it does not need to repeatedly solve an optimization problem for different waveforms. On the other hand, it bypasses the task of a priori estimating the number of switches which is necessary to reach the desired Fourier coefficients.
	
	\item[2.] \textit{Solvable set problem}: as we are not restricting the solution to have a prescribed waveform,  our approach provides solutions for an ample solvable set. 
	
	\item[3.] \textit{Policy problem}: the policy obtained through our methodology is not a gathering of several policies to which may correspond disjoint or even overlapping solvable sets. Hence, the continuity of the solution angles is guaranteed and we do not generate regions with no solution to the SHM problem.
\end{itemize}

However, some relevant issues are not completely covered by our study, and will be considered in future works:
\begin{itemize}
	\item[1.] \textbf{Minimal number of switching angles.} In practical applications, to optimize the converters' performance, it is required to maintain the number of switches in the SHM signal the lowest possible. It then becomes very relevant to determine which is the minimum number of switches allowing to reach the desired target Fourier coefficients.
	\item[2.] \textbf{Stability of the waveform and number of switching angles.} Related to the previous point, we observe in our numerical simulations in Section \ref{sec:Simulations} that, although the optimal control $u^\ast$ is $L^1$-continuous with respect to the initial condition,  the waveform and even the number of switching angles may change when varying the parameter $m$ continuously.  
	A finer analysis of the Problem \ref{pb:OCP_penalizado} may provide more information and understanding concerning this phenomenon. 
	\item[3.] \textbf{Characterization of the solvable set.} It would be interesting to have a full characterization of the solvable set for the SHM problem, thus determining the entire range of Fourier coefficients which can be reached by means of our approach.
	\item[4.] \textbf{Reduce the computational cost.} In this paper,  we have used existing numerical tools in optimal control to solve problem \ref{pb:OCP_penalizado}.
	It would be interesting to design algorithms adapted to our specific problem and compare their performance with other existing techniques in the SHM literature.
\end{itemize}

\end{document}